\theoremstyle{plain}
\newtheorem{theorem}{Theorem}[section]
\newtheorem{proposition}[theorem]{Proposition}
\newtheorem{lemma}[theorem]{Lemma}
\newtheorem{corollary}[theorem]{Corollary}
\newtheorem{remark}[theorem]{Remark}
\newtheorem{definition}[theorem]{Definition}
\newtheorem{notation}[theorem]{Notation}
\newtheorem{main theorem}[theorem]{Main Theorem}
\newtheorem{convention}[theorem]{Convention}
\newcommand{\NN}{\mathbb{N}}
\newcommand{\ZZ}{\mathbb{Z}}
\newcommand{\QQ}{\mathbb{Q}}
\newcommand{\RR}{\mathbb{R}}
\newcommand{\QQQ}{\hat{\mathbb{Q}}}
\newcommand{\Conway}{\mbox{\boldmath$S$}^{2}}
\newcommand{\Conways}
{(\mbox{\boldmath$S$}^{2},\mbox{\boldmath$P$})}
\newcommand{\PP}{\mbox{\boldmath$P$}}
\newcommand{\PConway}{\mbox{\boldmath$S$}}
\newcommand{\OO}{\mbox{\boldmath$O$}}
\newcommand{\rtangle}[1]{(B^3,t({#1}))}
\newcommand{\Isom}{\mbox{$\mathrm{Isom}$}}
\newcommand{\Fix}{\mbox{$\mathrm{Fix}$}}
\newcommand{\svert}{\,|\,}
\newcommand{\lp}{(\hskip -0.07cm (}
\newcommand{\rp}{)\hskip -0.07cm )}
\newcommand{\xr}{\mbox{$x_{\ell}$}}
\newcommand{\yr}{\mbox{$y_{\ell}$}}
\renewcommand\subsection{\@startsection{subsection}{2}{0mm}
    {-10.5dd plus-8pt minus-4pt}{10.5dd}
     {\normalsize\upshape}}
\begin{document}
\title[Parabolic generating pairs of genus-one 2-bridge knots]
{Parabolic generating pairs of genus-one 2-bridge
knot groups}
\author{Donghi Lee}
\address{Department of Mathematics\\
Pusan National University \\
San-30 Jangjeon-Dong, Geumjung-Gu, Pusan, 609-735, Korea}
\email{donghi@pusan.ac.kr}

\author{Makoto Sakuma}
\address{Department of Mathematics\\
Graduate School of Science\\
Hiroshima University\\
Higashi-Hiroshima, 739-8526, Japan}
\email{sakuma@math.sci.hiroshima-u.ac.jp}

\subjclass[2010]{Primary 20F06, 57M25 \\
\indent {The first author was supported by Basic Science Research Program
through the National Research Foundation of Korea(NRF) funded
by the Ministry of Education, Science and Technology(2014R1A1A2054890).
The second author was supported by JSPS Grants-in-Aid 15H03620.}}


\begin{abstract}
We show that any parabolic generating pair of a
genus-one hyperbolic
$2$-bridge knot group
is equivalent to the upper or lower meridian pair.
As an application, we obtain a complete classification of
the epimorphisms from $2$-bridge knot groups to
genus-one hyperbolic
$2$-bridge knot groups.
\end{abstract}

\maketitle

\section{Introduction}
\label{sec:intro}

In \cite[Theorem~4.3]{Adams},
Adams proved that the fundamental group of a finite volume hyperbolic manifold is
generated by two parabolic transformations
if and only if it is homeomorphic to the complement of a $2$-bridge link $K(r)$
which is not a torus link.
This refines
the result of Boileau and Zimmermann \cite[Corollary~3.3]{Boileau-Zimmermann}
that a link in $S^3$ is a $2$-bridge link if and only if its link group is generated by two meridians.
Adams also proved that any parabolic generating pair
of a hyperbolic $2$-bridge link
consists of meridians.
This means that any such pair is represented by an arc
properly embedded in the exterior $E(K(r))$,
together with a pair of meridional loops on $\partial E(K(r))$
passing through the endpoints of the arc.
The meridian pair represented by the upper (resp., lower) tunnel
forms a parabolic generating pair, and is called
the {\it upper meridian pair} (resp., the {\it lower meridian pair}).
He also proved that
each hyperbolic $2$-bridge link group admits
only finitely many distinct parabolic generating pairs up to conjugacy
\cite[Corollary~4.1]{Adams} and moreover that,
for the figure-eight knot group,
the upper and lower meridian pairs are the only parabolic generating pairs
up to conjugacy \cite[Corollary~4.6]{Adams}).

These results were generalized to all $2$-bridge links by Agol \cite{Agol}.
In fact,
he classified all two parabolic generator Kleinian groups and their parabolic generating pairs.
To this end, he proved that
for any properly embedded arc in $E(K(r))$
which is not properly homotopic to the upper tunnel nor the lower tunnel,
the subgroup of the link group of $K(r)$
generated by the meridian pair represented by the arc
is a free group,
by using the checkerboard surfaces and Klein-Maskit combination theorem.

The purpose of this paper, however, is to give an alternative proof
of this result for
genus-one hyperbolic $2$-bridge knots
by using small cancellation theory and
a geometric observation suggested by Michel Boileau to us \cite{Boileau}.
Recall that a $2$-bridge knot has genus one
if and only if it is equivalent to $K(r)$ with
\[
r=[2m, \pm 2n]:=
\displaystyle \frac 1{\displaystyle 2m\pm \frac 1{\displaystyle 2n}},
\] where $m$ and $n$ are positive integers.

We now describe our strategy.
It is well-known that any parabolic generating pair
of a $2$-bridge link group $G(K(r))$ determines
a {\it strong inversion}, $h$, of the link $K(r)$,
i.e., an orientation-preserving involution of
$S^3$ preserving $K(r)$ setwise
such that the fixed point set $\Fix(h)$ is a circle
intersecting each component of $K(r)$ in two points.
The key observation, which Boileau brought to us,
is that
the parabolic generating pair is represented by
an arc component of $\Fix(h)\cap E(K(r))$
(see Proposition~\ref{prop:strong-involution_1}).

Every $2$-bridge link admits a diagram which has a $(\ZZ/2\ZZ)^2$-symmetry
as in Figure~\ref{fig.long_meridian_pair}.
Let $h_1$ (resp., $h_2$) be the $\pi$-rotations about the horizontal (resp., vertical) axis
in the projection plane.
If the slope $r=q/p$ satisfies the condition $q^2\not\equiv 1\pmod p$,
then
any strong inversion $h$ of $K(r)$
is strongly equivalent to
one of the two standard inversions $h_1$ and $h_2$,
namely $h$ is conjugate to $h_1$ or $h_2$ by
a homeomorphism of $(S^3,K(r))$ which is pairwise isotopic to the identity
(cf. \cite[Proposition~3.6]{Sakuma1} and the proof of Corollary~\ref{prop:strong-involution_2}).
We may assume that $\Fix(h_1)$ contains the upper tunnel, $\tau_1$, and
$\Fix(h_2)$ contains the lower tunnel, $\tau_2$.
(See \cite{Adams-Reid, Bleiler-Moriah, Futer} for interesting related results.)
Now suppose further that $K(r)$ is a knot,
and let $\tau_i'$ be the component
of $\Fix(h_i)\cap E(K(r))$ different from $\tau_i$.
We call the meridian pairs represented by
$\tau_1'$ and $\tau_2'$, respectively,
the {\it long upper meridian pair}
and the {\it long lower meridian pair}
(see Figure~\ref{fig.long_meridian_pair}).
The main ingredient of this paper is a combinatorial proof of the following theorem
based on small cancellation theory.

\begin{figure}[h]
\includegraphics{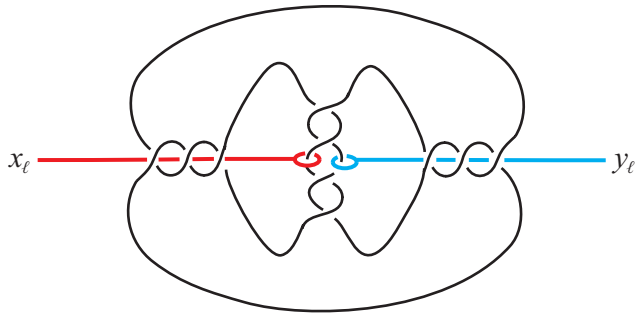}
\caption{
\label{fig.long_meridian_pair}
The long upper meridian pair
$\{\xr, \yr\}$
of $K(r)$ with $r=[4,6]$.
To be precise, $\xr$ (resp., $\yr$) is represented by the left (resp., right) lasso together with
an almost vertical line joining the end point of the lasso with the base point of $E(K(r))$.
Note that the upper tunnel is the short subarc, with both endpoints in $K(r)$,
of the horizontal central line in the projection plane.}
\end{figure}

\begin{theorem}
\label{thm:main_theorem}
Let $r=[2m,\pm 2n]$, where $m$ and $n$ are positive integers,
and let $(\xr, \yr)$ be the long upper meridian pair
or the long lower meridian pair for $K(r)$.
Then the subgroup of $G(K(r))$ generated by $\xr$ and $\yr$ is a free group.
\end{theorem}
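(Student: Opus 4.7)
The plan is to carry out all computations inside the standard Wirtinger-type two-generator one-relator presentation
\[
G(K(r)) = \langle a, b \svert a u = u b \rangle,
\]
where $\{a,b\}$ is the upper meridian pair and $u=u_r$ is the word in $a^{\pm 1}, b^{\pm 1}$ determined by the continued fraction expansion $r=[2m,\pm 2n]$. The key combinatorial feature, developed by the authors in their earlier work on $2$-bridge link groups, is that this relator $a u b^{-1} u^{-1}$ satisfies a small cancellation condition strong enough to yield a Dehn-type reduction that detects the identity in $G(K(r))$: if a freely reduced word in $a,b$ represents $1$, then it must contain a subword which is strictly more than half of a cyclic conjugate of the defining relator.

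The first step is to read off from Figure~\ref{fig.long_meridian_pair} explicit conjugating words $\alpha, \beta$ so that
\[
\xr = \alpha \, a^{\epsilon} \alpha^{-1}, \qquad \yr = \beta \, b^{\delta} \beta^{-1}
\]
(or the analogous expressions for the long lower pair). Both $\alpha$ and $\beta$ trace the long lasso of the figure, and their closed-form expressions in terms of $(2m,\pm 2n)$ show that they are essentially halves of $u_r$, up to a short prefix and suffix coming from the almost vertical connecting arc.

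Next I would assume for contradiction that some nontrivial freely reduced word $W = X_{i_1}^{\eta_1}\cdots X_{i_k}^{\eta_k}$ in $X=\xr$, $Y=\yr$ represents $1$ in $G(K(r))$. After substituting the conjugate expressions and performing all free reductions at the junctions between consecutive blocks, one obtains a freely reduced word $\widetilde W$ in $a,b$ with $\widetilde W =1$ in $G(K(r))$. By the small cancellation property of $u_r$, $\widetilde W$ must contain a subword which is strictly longer than half of a cyclic conjugate of $a u b^{-1} u^{-1}$. I would then argue, by direct inspection of how the four types of junctions $\alpha^{-1}\alpha$, $\alpha^{-1}\beta$, $\beta^{-1}\alpha$, $\beta^{-1}\beta$ can cancel, that such a long subword cannot in fact arise: each junction cancellation is short compared with $|u|$, while the intact middle portion of each conjugate block, anchored by its stable central letter $a^{\epsilon}$ or $b^{\delta}$, always contributes a stretch of $\widetilde W$ that differs from a half-relator piece in at least one position.

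The main obstacle will be the control of the junction cancellations in the worst cases, specifically when consecutive conjugate blocks are mutually almost inverse and when the sign sequence $(\eta_j)$ mimics the palindromic symmetry of $u_r$. Here the genus-one hypothesis $r=[2m,\pm 2n]$, which restricts $u_r$ to a two-level continued fraction structure and hence to a very simple syllable decomposition in $a$'s and $b$'s, is precisely what is needed: it forces any would-be match of more than half a cyclic conjugate of the relator against $\widetilde W$ to cross a surviving central letter of some block, yielding a concrete disagreement. Once this case analysis is complete one concludes $\widetilde W \ne 1$, contradicting the assumption, and hence $\{\xr, \yr\}$ freely generates a free subgroup of $G(K(r))$.
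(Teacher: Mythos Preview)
Your overall architecture---express $\xr,\yr$ as explicit conjugates $w_x a^{\epsilon}\bar w_x$, $w_y b^{\delta}\bar w_y$, substitute into a putative relation, free-reduce, and then use small cancellation for the one-relator presentation $\langle a,b\mid u_r\rangle$ to derive a contradiction---is exactly the line the paper follows. The paper carries out the first step in full (Propositions~\ref{prop:longest-base} and~\ref{prop:longest-base_PLUS} give the precise alternating words $w_x,w_y$ and their $S$-sequences), and your description of $\alpha,\beta$ as ``essentially halves of $u_r$'' is morally correct.

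The gap is in the small cancellation step. You invoke a Dehn-type criterion: a freely reduced word equal to $1$ must contain more than half of a cyclic conjugate of the relator. That is Greendlinger's lemma, which needs $C'(1/6)$ or $C(6)$. The $2$-bridge relator $u_r$ does \emph{not} satisfy this; it satisfies only the weaker $C(4)$--$T(4)$ condition, so van Kampen diagrams are $[4,4]$-maps and no ``more than half'' statement is available. The paper therefore argues differently: the Lyndon--Schupp curvature formula forces many degree-$2$ boundary vertices; a pair of consecutive degree-$2$ vertices would force a subword of $(w)$ that is a product of three pieces (not ``more than half''), and Claim~2 rules this out by comparing the $S$-sequence of such a three-piece subword (which must contain $(S_1,S_2,\ell)$ or $(\ell,S_2,S_1)$, Proposition~\ref{prop:S-sequence}(2)) against the explicitly computed $CS(w')$. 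One must then also handle the remaining configuration, degree-$2$ vertices separated only by degree-$3$ vertices, by a further $S$-sequence argument at the non-alternating letters $a^{k_j}$, $b^{-l_j}$. Your proposed junction analysis (``each junction cancellation is short compared with $|u|$'') does not substitute for this, because the obstruction is not about length but about the precise pattern of $S$-sequence entries (e.g.\ the presence or absence of a term $2m+1$ or $2m-1$). To make your sketch go through you would need to replace the ``more than half'' tool by the $[4,4]$-map curvature argument and the $S$-sequence bookkeeping of Proposition~\ref{prop:S-sequence}.
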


Since there is a homeomorphism from $(S^3, K(r))$ with $r=[2m,\pm 2n]$ to
$(S^3,K(r'))$ with $r'=[2n,\pm 2m]$,
which maps the long lower meridian pair of $K(r)$
to the long upper meridian pair of $K(r')$,
it is enough to prove
the theorem only for the long upper meridian pair.
Thus throughout the remainder of this paper,
$(\xr, \yr)$ denotes the long upper meridian pair for $K(r)$
as illustrated in Figure~\ref{fig.long_meridian_pair}.

In the special case when $r=[2m,-2m]$,
we have yet another equivalence class of strong inversions,
which is represented by the strong inversion, $h_3$, illustrated by
Figure~\ref{fig.additional-inversion}
(see also Figure~\ref{fig.additional-inversion2} and \cite[Proposition~3.6]{Sakuma1}).

\begin{figure}[h]
\includegraphics{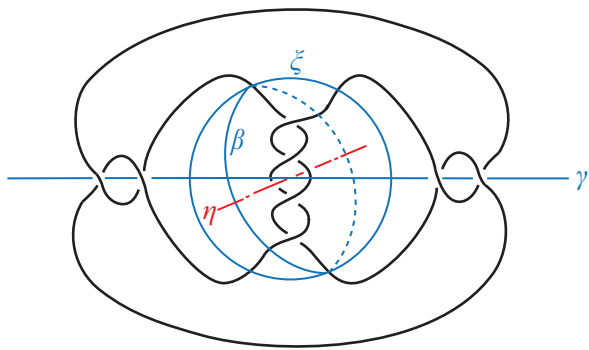}
\caption{
\label{fig.additional-inversion}
Additional symmetry of $K(r)$ for $r=[2m,-2m]$ ($m=2$).
$\Isom^+(S^3-K(r)) \cong \langle g, h_1 \svert g^4, h_1^2, (gh_1)^2\rangle$,
where $g=(\pi/2$-rotation about $\eta) \circ (\pi$-rotation about $\xi)$,
$h_1=\pi$-rotation around $\gamma$, and
$h_3=gh_1=\pi$-rotation around $\beta$.}
\end{figure}

\begin{theorem}
\label{thm:main_theorem_2}
Let $r=[2m,-2m]$, where $m$ is
an integer $\ge 2$,
and let $h_3$ be the strong inversion of $(S^3,K(r))$ as in the above.
Then, for each of the arc components of $\Fix(h_3)\cap E(K(r))$,
the subgroup of $G(K(r))$ generated by the meridian pair
represented by the arc is
a proper subgroup of $G(K(r))$.
\end{theorem}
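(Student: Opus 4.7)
The plan is to derive Theorem~\ref{thm:main_theorem_2} from Theorem~\ref{thm:main_theorem} by exploiting the additional order-$4$ symmetry $g$ of $(S^3,K(r))$ that is present precisely when $r=[2m,-2m]$.

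First I would locate the arc components of $\Fix(h_3)\cap E(K(r))$. Since $h_3$ is a strong inversion, $\Fix(h_3)$ is a circle meeting $K(r)$ transversely in two points, so $\Fix(h_3)\cap E(K(r))$ consists of exactly two properly embedded arcs, say $\alpha_1$ and $\alpha_2$. By Proposition~\ref{prop:strong-involution_1}, each $\alpha_i$ represents a parabolic meridian pair $(x_i,y_i)$ of $G(K(r))$.

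Next, using the presentation $\Isom^+(S^3-K(r))\cong\langle g,h_1\svert g^4,h_1^2,(gh_1)^2\rangle$ and the explicit positions of the axes $\gamma$ (of $h_1$) and $\beta$ (of $h_3$) in Figure~\ref{fig.additional-inversion}, I would check that each $\alpha_i$ is carried by some element of $\Isom^+(S^3-K(r))$ to an arc pairwise isotopic to the long upper arc $\tau_1'$ or to the long lower arc $\tau_2'$. The natural candidates are $g$ and $g^{-1}$, which rotate the symmetry axis configuration into itself while cyclically permuting the roles of the three inversions. Once this correspondence is made, the induced automorphism $\phi_{i\ast}$ of $G(K(r))$ sends $\langle x_i,y_i\rangle$ isomorphically onto the subgroup generated by the long upper or long lower meridian pair, which is free of rank~$2$ by Theorem~\ref{thm:main_theorem}. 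Since $G(K(r))$ has abelianization $\ZZ$ but is not cyclic, it cannot itself be free, so a free subgroup of rank~$2$ is automatically a proper subgroup.

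The main obstacle I anticipate is the geometric matching described above: confirming that each arc of $\Fix(h_3)\cap E(K(r))$ is indeed pairwise isotopic, after applying a suitable self-homeomorphism of $(S^3,K(r))$, to one of $\tau_1',\tau_2'$, rather than forming a genuinely new equivalence class of arcs in $E(K(r))$. This requires a careful bookkeeping of how the axes $\beta$, $\gamma$, $\xi$, and $\eta$ sit inside the $2$-bridge sphere and how $g$ acts on them. Should this direct matching prove elusive, a fallback is to rerun the small-cancellation argument of Theorem~\ref{thm:main_theorem} with $(x_i,y_i)$ as the generating pair, which should still be tractable thanks to the ambient $(\ZZ/2\ZZ)^2$-symmetry of the $2$-bridge diagram and the genus-one assumption.
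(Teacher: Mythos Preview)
Your proposed reduction to Theorem~\ref{thm:main_theorem} does not go through: the arcs of $\Fix(h_3)\cap E(K(r))$ are \emph{not} carried by any element of $\Isom^+(S^3-K(r))$ to $\tau_1'$ or $\tau_2'$. In the dihedral group $D_8=\langle g,h_1\mid g^4,h_1^2,(gh_1)^2\rangle$ the reflections split into two conjugacy classes, $\{h_1,h_2\}=\{h_1,g^2h_1\}$ and $\{h_3,g^3h_1\}=\{gh_1,g^3h_1\}$. In particular $g^{\pm 1}h_3g^{\mp 1}=g^3h_1$ or $gh_1$, never $h_1$ or $h_2$; so no isometry sends $\Fix(h_3)$ to $\Fix(h_1)$ or $\Fix(h_2)$. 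Concretely, the paper identifies the two arcs $\beta_1,\beta_2$ of $\Fix(h_3)\cap E(K(r))$ with essential arcs on the Conway sphere of slopes $1/(2m-1)$ and $1/(2m+1)$, which are genuinely new isotopy classes, not the long upper or lower tunnels. So the ``main obstacle'' you flag is not a technicality to be checked but an actual failure of the strategy.

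Your fallback also breaks down. The paper notes explicitly (Remark after the proof) that the small cancellation argument of Theorem~\ref{thm:main_theorem} can be adapted to show $\langle x_2,y_2\rangle$ is free, but that the method does \emph{not} work for $\langle x_1,y_1\rangle$. The actual proof takes a completely different, homological route: one passes to the $\pi$-orbifold group $\OO(K(r))\cong H_1(M(K(r)))\rtimes\ZZ/2\ZZ$, where $M(K(r))$ is the double branched cover and $H_1(M(K(r)))\cong\ZZ/p\ZZ$ with $p=(2m-1)(2m+1)$. Because $\beta_i$ has slope $s_i\in\{1/(2m-1),\,1/(2m+1)\}$, the lift $\tilde\alpha_{s_i}$ generates a proper cyclic subgroup of $\ZZ/p\ZZ$ (of order $2m+1$ or $2m-1$), so the image of $\langle x_i,y_i\rangle$ in $\OO(K(r))$ is a proper dihedral subgroup. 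Hence $\langle x_i,y_i\rangle$ is proper in $G(K(r))$. This argument is short, does not use Theorem~\ref{thm:main_theorem} at all, and yields only properness rather than freeness---which is all the statement claims.
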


In fact, it is not difficult to extend
Theorem~\ref{thm:main_theorem_2}
to all hyperbolic $2$-bridge knots $K(q/p)$ with $q^2\equiv 1 \pmod{p}$.
We also believe that Theorem~\ref{thm:main_theorem}
can be extended to all hyperbolic $2$-bridge links by a similar method,
but the arguments would become much more complicated.

These two theorems enable us to recover
a special case of Agol's result~\cite{Agol}.

\begin{theorem}
\label{cor:Agol-special}
Let $K(r)$ be a genus-one hyperbolic $2$-bridge knot,
namely $r=[2m,2n]$ with $m,n\in \NN$
or $r=[2m,-2n]$ with $m,n\in \NN$ and $(m,n)\ne (1,1)$.
Then the upper and lower meridian pairs are the only
parabolic generating pairs of the knot group of $K(r)$
up to equivalence.
\end{theorem}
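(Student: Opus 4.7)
The plan is to combine the geometric characterization of parabolic generating pairs via strong inversions (Proposition~\ref{prop:strong-involution_1}) with the algebraic input packaged in Theorems~\ref{thm:main_theorem} and \ref{thm:main_theorem_2}. Let $(x,y)$ be any parabolic generating pair of $G(K(r))$. By Adams' theorem recalled in the introduction, $x$ and $y$ are meridians of $K(r)$, and by Proposition~\ref{prop:strong-involution_1} the pair $(x,y)$ is represented, up to conjugacy in $G(K(r))$, by an arc component of $\Fix(h)\cap E(K(r))$ for some strong inversion $h$ of $K(r)$. Hence the proof reduces to enumerating the possibilities for $h$ up to strong equivalence and, for each such $h$, the arc components of its fixed-point set in the exterior.

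Next I would invoke the classification of strong inversions recalled in the introduction: for a $2$-bridge knot $K(r)$ whose slope $q/p$ satisfies $q^2\not\equiv 1\pmod p$, every strong inversion is strongly equivalent to $h_1$ or $h_2$ (\cite[Proposition~3.6]{Sakuma1} and Corollary~\ref{prop:strong-involution_2}), whereas in the remaining genus-one hyperbolic case $r=[2m,-2m]$ with $m\ge 2$ the only additional equivalence class is represented by the involution $h_3$ of Figure~\ref{fig.additional-inversion}. Since strongly equivalent inversions give equivalent meridian pairs, we may assume that $h\in\{h_1,h_2,h_3\}$.

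The third step is a case analysis on $h$. For $h=h_1$, the set $\Fix(h_1)\cap E(K(r))$ consists of exactly two properly embedded arcs: the upper tunnel $\tau_1$, which represents the upper meridian pair, and the long arc $\tau_1'$, which represents the long upper meridian pair $(\xr,\yr)$. Theorem~\ref{thm:main_theorem} says that $\langle \xr,\yr\rangle$ is free, but $G(K(r))$ contains a peripheral $\ZZ^2$ and is therefore not free; so $(\xr,\yr)$ must generate a proper subgroup and cannot be a parabolic generating pair. The symmetric analysis of $h=h_2$ rules out the long lower meridian pair and leaves only the lower meridian pair. Finally, when $r=[2m,-2m]$ with $m\ge 2$, Theorem~\ref{thm:main_theorem_2} shows directly that the meridian pair associated with either arc component of $\Fix(h_3)\cap E(K(r))$ generates a proper subgroup of $G(K(r))$, so no parabolic generating pair arises from $h_3$. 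Collecting the three cases yields exactly the upper and lower meridian pairs.

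In view of this reduction, the main obstacle for Theorem~\ref{cor:Agol-special} is not in the present argument but in its inputs, namely Proposition~\ref{prop:strong-involution_1}, the classification of strong inversions, and especially Theorems~\ref{thm:main_theorem} and \ref{thm:main_theorem_2}; the latter two, handling the long meridian subgroups and the $h_3$-subgroups respectively, contain all the combinatorial small-cancellation content of the paper. Given those, the proof of the present corollary is essentially a formal case analysis together with the elementary observation that a non-trivial knot group, containing a $\ZZ^2$ subgroup, is never free.
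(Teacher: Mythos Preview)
Your proof is correct and is exactly the paper's argument spelled out in detail; the paper's own proof is the single line ``Immediate from Theorems~\ref{thm:main_theorem} and~\ref{thm:main_theorem_2} and Corollary~\ref{prop:strong-involution_2}.'' The only minor imprecision is your claim that $h_3$ represents \emph{the only} additional strong-equivalence class when $q^2\equiv 1\pmod p$: the proof of Corollary~\ref{prop:strong-involution_2}(2) actually lists four classes $h_1,h_2,h_3,gh_3g^{-1}$, but the meridian pairs coming from $gh_3g^{-1}$ are carried by the automorphism $g_*$ to those of $h_3$ and are therefore eliminated by Theorem~\ref{thm:main_theorem_2} just the same, so your conclusion is unaffected.
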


For the precise definition of the equivalence relation in the above corollary,
see the first paragraph of Section~\ref{sec:strong-inversion}.

Together with the result of \cite[Corollary~1.3]{BBRW}
on epimorphisms from $2$-bridge knot groups
and
the characterization by \cite[Main Theorem~2.4]{lee_sakuma}
of upper meridian pair preserving epimorphisms
between $2$-bridge link groups,
the above corollary implies the following theorem.

\begin{theorem}
\label{thm:cor-epimorohism}
Let $K(r)$ be a genus-one hyperbolic $2$-bridge knot
and $K(\tilde r)$ be a $2$-bridge knot.
Then there is an epimorphism from $G(K(\tilde r))$ onto $G(K(r))$
if and only if
$\tilde r$ or $\tilde r+1$
belongs to
{\rm (i)} the $\hat\Gamma_r$-orbit of $r$ or $\infty$
or
{\rm (ii)} the $\hat\Gamma_{r'}$-orbit of $r'$ or $\infty$.
Here {\rm (a)} $\hat\Gamma_r$ {\rm(}resp., $\hat\Gamma_{r'}${\rm)}
is the subgroup of the automorphism group
of the Farey tessellation
generated by the reflections in the Farey edges with endpoints $\infty$ or $r$
{\rm(}resp., $r'${\rm)},
and {\rm (b)} $r'= q'/ p$,
where $q q'\equiv 1 \pmod p${\rm ;}
$p$ and $q$ are relatively prime integers such that
$r=q/p$.
\end{theorem}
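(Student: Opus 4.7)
The plan is to deduce Theorem~\ref{thm:cor-epimorohism} from Theorem~\ref{cor:Agol-special} by combining it with the two external results cited in the statement: the meridionality of epimorphisms onto hyperbolic $2$-bridge knot groups \cite[Corollary~1.3]{BBRW}, and the classification of upper meridian pair preserving epimorphisms between $2$-bridge link groups \cite[Main Theorem~2.4]{lee_sakuma}.

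Given an epimorphism $\varphi\colon G(K(\tilde r))\twoheadrightarrow G(K(r))$, I would first apply \cite[Corollary~1.3]{BBRW} to arrange, after modification by an inner automorphism, that $\varphi$ is meridional. The images of the upper meridian pair $(\xr,\yr)$ of $K(\tilde r)$ then form a parabolic generating pair of $G(K(r))$. Since $K(r)$ is by hypothesis a genus-one hyperbolic $2$-bridge knot, Theorem~\ref{cor:Agol-special} applies and forces this image pair to be equivalent either to the upper meridian pair or to the lower meridian pair of $K(r)$.

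In the upper case, $\varphi$ becomes an upper meridian pair preserving epimorphism $G(K(\tilde r))\twoheadrightarrow G(K(r))$, so \cite[Main Theorem~2.4]{lee_sakuma} yields condition~(i) on $\tilde r$. In the lower case, I would invoke the fact that the lower meridian pair of $K(r)$ coincides with the upper meridian pair of $K(r')$ under the canonical identification $G(K(r))=G(K(r'))$, where $r'=q'/p$ with $qq'\equiv 1\pmod{p}$; then applying \cite[Main Theorem~2.4]{lee_sakuma} to $K(r')$ yields condition~(ii). The alternative $\tilde r\mapsto\tilde r+1$ reflects that $K(\tilde r)=K(\tilde r+1)$ as knots whereas the hypothesis of \cite[Main Theorem~2.4]{lee_sakuma} depends on a definite rational representative of the slope, so either representative of $\{\tilde r,\tilde r+1\}$ may be the one lying in the relevant Farey orbit. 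The converse direction, that every such orbit produces an epimorphism, is supplied directly by \cite[Main Theorem~2.4]{lee_sakuma}.

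The step I expect to be the main obstacle is not any single deep argument but the careful bookkeeping of equivalence relations: one must reconcile the notion of equivalence used in \cite{BBRW} (modification by inner automorphisms only), the broader equivalence of Section~\ref{sec:strong-inversion} invoked in Theorem~\ref{cor:Agol-special}, and the specific normal form demanded by \cite{lee_sakuma}, and then verify that the $r\leftrightarrow r'$ symmetry together with the $\tilde r\leftrightarrow\tilde r+1$ ambiguity captures precisely the \emph{or} statements in the conclusion of the theorem, with neither spurious nor missing orbits.
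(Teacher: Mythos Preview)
Your proposal is correct and follows essentially the same route as the paper; the only difference is that you invoke Theorem~\ref{cor:Agol-special} directly to conclude the image pair is the upper or lower meridian pair, whereas the paper re-expands this step via Corollary~\ref{prop:strong-involution_2} and Theorems~\ref{thm:main_theorem} and~\ref{thm:main_theorem_2}---logically equivalent, and your packaging is slightly cleaner. One notational slip: in this paper $(\xr,\yr)$ denotes the \emph{long} upper meridian pair, not the upper meridian pair; the upper meridian pair of $K(\tilde r)$ should be written $\{\tilde a,\tilde b\}$.
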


At the end of the introduction, we would like to point out that
if we remove
the condition that generating pairs consist of parabolic elements,
then it is proved by Heusener and Porti~\cite{Heusener-Porti} that
every hyperbolic knot admits infinitely many generating pairs
up to Nielsen equivalence.
Moreover, the same conclusion for torus knots,
especially non-hyperbolic $2$-bridge knots,
had been proved by Zieschang~\cite{Zieschang}.

The authors would like thank Michel Boileau for
fruitful discussions.
They would also like to thank Colin Adams and Ian Agol for their encouragement.

\section{Strong inversions associated with parabolic generating pairs}
\label{sec:strong-inversion}

Let $K$ be a link in $S^3$ and $E(K)$ the exterior of $K$,
namely the complement of an open regular neighborhood of $K$.
An essential simple loop in $\partial E(K)$ is called a {\it meridian}
if it bounds a disk on the (closed) regular neighborhood of $K$.
An element of the link group $G(K)=\pi_1(E(K))$ which is freely homotopic
to a meridian is also called a {\it meridian}.
By a {\it meridian pair} of $K$,
we mean an unordered pair $\{x,y\}$ of meridians of $G(K)$.
Two meridian pairs $\{x,y\}$ and $\{x',y'\}$ are said to be {\it equivalent}
if $\{x',y'\}$ is equal to $\{x^{\varepsilon_1}, y^{\varepsilon_2}\}$
for some $\varepsilon_1, \varepsilon_2\in\{\pm 1\}$
up to simultaneous conjugacy.

Note that there is a bijective correspondence between the
set of meridian pairs up to equivalence
and the set of arcs properly embedded in $E(K)$ up to proper homotopy.
Here a proper arc $\gamma$ in $E(K)$ corresponds to
a meridian par $\{x, y\}$ which is obtained as follows.
Pick an interior point, $q$, in $\gamma$,
and divide $\gamma$ into two subarcs $\gamma_1$ and $\gamma_2$
such that $\gamma_1\cap \gamma_2=\{q\}$.
For $i=1,2$, let $C_i$ be a meridian passing through the end point
of $\gamma_i$ in $\partial E(K)$.
Let $\delta$ be an arc in $E(K)$ joining $q$ with
the base point of $E(K)$.
Then the pair of based loops $\{\delta\cup \gamma_i\cup C_i\}_{i=1,2}$
gives the meridian pair $\{x,y\}$ corresponding to the arc $\gamma$.

If $K$ is hyperbolic, then a meridian pair $\{x,y\}$ is identified with
a pair of parabolic transformations of the hyperbolic $3$-space.
We are interested only in the case where $\{x,y\}$ generate a non-elementary
Kleinian group, namely the case where
the parabolic fixed points of $x$ and $y$ are not identical.
Then the geodesic joining the parabolic fixed points of $x$ and $y$
descends to a proper geodesic in the hyperbolic manifold $S^3-K$
and hence determines a proper arc in $E(K)$,
where we identify $E(K)$ with the complement of
an open cusp neighborhoods.
By the correspondence between the fundamental group and
the covering transformation group,
we see that this arc corresponds to the meridian pair $\{x, y\}$.

Boileau~\cite{Boileau} informed us of the following fact,
which had been observed by Adams~\cite{Adams}.

\begin{proposition}
\label{prop:strong-involution_1}
Let $K(r)$ be a hyperbolic $2$-bridge link,
and let $\{x,y\}$ be a
parabolic generating pair of the link group $G(K(r))$.
Then there is a strong inversion $h$ of $K(r)$
such that $(h_*(x), h_*(y))=(x^{-1}, y^{-1})$ and that
$\{x,y\}$ is a meridian pair
corresponding to an arc component
of $\Fix(h)\cap E(K(r))$.
Here $h_*$ denotes the automorphism of $G(K(r))$ induced by $h$.
\end{proposition}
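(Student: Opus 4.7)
The approach I would take is hyperbolic--geometric. By \cite{Adams}, any parabolic generating pair of a hyperbolic $2$-bridge link group consists of meridians, so $\{x,y\}$ is a meridian pair and, via the correspondence recalled above, it is represented by a properly embedded arc $\gamma$ in $E(K(r))$. Realize $G(K(r))$ as a torsion-free Kleinian group acting on $\HH^{3}$, and let $p_x,p_y\in\partial\HH^{3}$ be the parabolic fixed points of $x$ and $y$; these are distinct because $\{x,y\}$ generates a non-elementary group. The arc $\gamma$ lifts to the complete geodesic $\tilde\gamma\subset\HH^{3}$ joining $p_x$ to $p_y$.

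Take $h$ to be the half-turn of $\HH^{3}$ about $\tilde\gamma$: an orientation-preserving isometric involution fixing $\tilde\gamma$ pointwise, and in particular fixing $p_x$ and $p_y$ on $\partial\HH^{3}$. On a horosphere $H$ based at $p_x$, the parabolic $x$ acts as a non-trivial Euclidean translation, while $h$ restricts to the $\pi$-rotation of $H$ centered at the single point $\tilde\gamma\cap H$. Since a $\pi$-rotation reverses every Euclidean translation, $hxh^{-1}=x^{-1}$; by a symmetric argument at $p_y$, $hyh^{-1}=y^{-1}$. Because $\{x,y\}$ generates $G(K(r))$, it follows that $h$ normalizes $G(K(r))$ and hence descends to an orientation-preserving isometric involution $\bar h$ of the hyperbolic $3$-manifold $M:=\HH^{3}/G(K(r))$, which is the complete hyperbolic structure on $S^{3}\setminus K(r)$.

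Since $\bar h$ permutes the torus cusps of the finite-volume manifold $M$, it extends to an orientation-preserving involution of $S^{3}$, still denoted $h$, with $h(K(r))=K(r)$. By Smith theory, the fixed set $\Fix(h)\subset S^{3}$ is a single topological circle $F$ (it is non-empty, as it contains the image of $\tilde\gamma$). To verify that $h$ is a strong inversion, I would show that $F$ meets each component of $K(r)$ in exactly two points: every component $C$ of $K(r)$ preserved by $h$ supports a non-trivial involution $h|_{C}$ (non-trivial since otherwise $C\subset F$, which is impossible because $F$ is transverse to $K(r)$ at the endpoints of $\gamma$); as $h$ preserves the orientation of $S^{3}$ and acts as a $\pi$-rotation on the normal direction at each fixed point on $K(r)$, $h|_{C}$ must be orientation-reversing on $C$ and hence has exactly two fixed points. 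The local picture at each cusp --- where $\bar h$ acts on the torus cross-section as a $\pi$-rotation --- then guarantees that every component of $K(r)$ is preserved by $h$ and met by $F$. By construction $\gamma$ is an arc component of $F\cap E(K(r))$ and the meridian pair it represents is $\{x,y\}$.

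The main obstacle is the last step --- packaging the hyperbolic datum $h$ into the topological notion of a strong inversion, and in particular confirming that $F$ meets each component of $K(r)$ in the prescribed way. The algebraic identities $hxh^{-1}=x^{-1}$ and $hyh^{-1}=y^{-1}$ reduce to the one-line horosphere computation, so the substance of the argument really lies in the geometric step of producing the half-turn $h$ and in the component-by-component analysis of its fixed set.
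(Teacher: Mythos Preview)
Your proposal is correct and follows essentially the same route as the paper: take the order-$2$ elliptic isometry (half-turn) of $\HH^3$ about the geodesic joining the parabolic fixed points of $x$ and $y$, check via the horosphere picture that it conjugates each of $x,y$ to its inverse, and descend to an involution of $S^3-K(r)$ that extends to a strong inversion whose fixed set contains the arc representing $\{x,y\}$. The only cosmetic difference is that the paper handles your ``main obstacle'' in one line---observing that the induced map on each boundary torus is the hyper-elliptic involution, which immediately gives the extension over $K(r)$ with two fixed points per component---rather than going through Smith theory and a component-by-component analysis.
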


\begin{proof}
Let $K(r)$ be a hyperbolic $2$-bridge link,
and let $\{x,y\}$ be a parabolic generating pair.
Then, by assumption, $x$ and $y$ are identified with
parabolic transformations.
Since $\{x,y\}$ generates non-elementary group $G(K(r))$,
$x$ and $y$ have distinct parabolic fixed points.
Let $\eta$ be the order $2$ elliptic transformation
whose axis is the geodesic, $\tilde c$, joining the two parabolic fixed points.
Then $\eta x\eta=x^{-1}$ and
$\eta y\eta=y^{-1}$ (cf. \cite[Section~5.4]{Thurston})
and therefore $\eta$ descends to an
orientation-preserving involution, $h$, of $S^3-K(r)$,
such that the restriction of $h$ to $\partial E(K(r))$
is a hyper-elliptic involution.
Thus $h$ extends to a strong inversion of $K(r)$,
which we continue to denote by the same symbol $h$.
Now recall the result \cite[Theorem~4.3]{Adams} that
$\{x,y\}$ is a meridian pair.
Thus by the observation made before this proposition,
we obtain the desired result.
\end{proof}

\begin{corollary}
\label{prop:strong-involution_2}
For a hyperbolic $2$-bridge knot $K(r)$ with $r=q/p$,
the following hold.

{\rm (1)}
If $q^2\not\equiv 1\pmod p$,
then any parabolic generating pair of $G(K(r))$
is equivalent to either the upper, lower,
long upper or long lower meridian pair.

{\rm (2)}
If $q^2\equiv 1\pmod p$,
then for any parabolic generating pair of $G(K(r))$,
one of the following holds.
\begin{enumerate}
\item[{\rm (i)}]
It is equivalent to either the upper, lower,
long upper or long lower meridian pair.
\item[{\rm (ii)}]
There is an automorphism of $G(K(r))$
which carries it to a parabolic generating pair
represented by one of the two arc components
of $\Fix(h_3)\cap E(K(r))$,
where $h_3$ is the strong inversion, as illustrated in
Figure~\ref{fig.additional-inversion}.
\end{enumerate}
\end{corollary}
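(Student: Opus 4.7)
The plan is to combine Proposition~\ref{prop:strong-involution_1} with the classification of strong inversions of $(S^3,K(r))$ recorded in \cite[Proposition~3.6]{Sakuma1}. Let $\{x,y\}$ be any parabolic generating pair of $G(K(r))$. By Proposition~\ref{prop:strong-involution_1}, there is a strong inversion $h$ of $(S^3,K(r))$ together with an arc component $\alpha$ of $\Fix(h)\cap E(K(r))$ whose associated meridian pair is $\{x,y\}$. Hence the task reduces to enumerating the possible pairs $(h,\alpha)$ up to a suitable ambiguity on $h$ (which will absorb the equivalence on meridian pairs).

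Next I would invoke \cite[Proposition~3.6]{Sakuma1}: every strong inversion of $(S^3,K(r))$ is strongly equivalent to $h_1$ or $h_2$ when $q^2\not\equiv 1\pmod p$, and strongly equivalent to one of $h_1,h_2,h_3$ when $q^2\equiv 1\pmod p$. Here ``strongly equivalent'' means conjugate by a homeomorphism $\phi$ of $(S^3,K(r))$ which is pairwise isotopic to the identity. Writing $h=\phi h_i\phi^{-1}$ for such a $\phi$, the hypothesis that $\phi$ is pairwise isotopic to the identity implies that its induced automorphism $\phi_*$ of $G(K(r))$ is inner. Then $\alpha':=\phi^{-1}(\alpha)$ is an arc component of $\Fix(h_i)\cap E(K(r))$, and the meridian pair associated with $\alpha'$ is equivalent, in the sense defined at the start of this section, to $\{x,y\}$.

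It then remains to list the arc components of $\Fix(h_i)\cap E(K(r))$ for each $i$. Since $K(r)$ is a knot, the fixed circle $\Fix(h_1)$ meets $K(r)$ in exactly two points, so $\Fix(h_1)\cap E(K(r))$ has precisely two arc components: the upper tunnel $\tau_1$, which represents the upper meridian pair by definition, and the complementary arc $\tau_1'$, which represents the long upper meridian pair. The same description applies to $\Fix(h_2)\cap E(K(r))$, yielding the lower and the long lower meridian pairs. This covers case~(1) and option~(i) of case~(2). When $i=3$, the two arc components of $\Fix(h_3)\cap E(K(r))$ are precisely those appearing in option~(ii), and the (inner) automorphism $\phi_*^{-1}$ furnishes the required automorphism of $G(K(r))$ carrying $\{x,y\}$ to the corresponding pair.

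The main obstacle is largely organizational: one must carefully verify that the strong-equivalence conclusion of \cite[Proposition~3.6]{Sakuma1} indeed translates into an \emph{inner} (as opposed to merely outer) automorphism of $G(K(r))$ relating the meridian pairs, which is precisely the point of requiring $\phi$ to be pairwise isotopic to the identity. Once this is in hand, everything reduces to the transparent geometric observation, visible from the $(\ZZ/2\ZZ)^2$-symmetric diagram of $K(r)$, that $\Fix(h_i)\cap E(K(r))$ for $i=1,2$ consists of the tunnel $\tau_i$ and its long partner $\tau_i'$, while for $i=3$ neither of the two arc components is a standard tunnel, so it must be handled separately as option~(ii).
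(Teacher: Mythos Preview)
Your approach is essentially the same as the paper's: invoke Proposition~\ref{prop:strong-involution_1} to obtain a strong inversion $h$, classify the strong inversions of $(S^3,K(r))$ up to strong equivalence, and then enumerate the arc components of the fixed-point sets. The paper differs only in presentation: rather than citing \cite[Proposition~3.6]{Sakuma1} directly, it computes $\Isom^+(S^3-K(r))$ (as $(\ZZ/2\ZZ)^2$ or $D_8$, according to the case) and then appeals to Tollefson's theorem or the orbifold theorem to conclude that every strong inversion is strongly equivalent to an isometric one.

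There is, however, one point in case~(2) where your enumeration falls short. The paper's $D_8$ analysis actually yields \emph{four} strong-equivalence classes of strong inversions, represented by $h_1$, $h_2$, $h_3$, and $gh_3g^{-1}$, where $g$ is the order-$4$ isometry of Figure~\ref{fig.additional-inversion}; you list only three. Since $g$ is not pairwise isotopic to the identity, $gh_3g^{-1}$ is not strongly equivalent to $h_3$, and your inner-automorphism reduction does not reach it. The fix is easy and is implicit in the paper's proof: conjugation by $g^{-1}$ carries the arc components of $\Fix(gh_3g^{-1})\cap E(K(r))$ to those of $\Fix(h_3)\cap E(K(r))$, and the (generally non-inner) automorphism $g_*^{-1}$ is precisely what option~(ii) asks for. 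This is also why (ii) demands merely ``an automorphism'' rather than an inner one. You should check what \cite[Proposition~3.6]{Sakuma1} actually asserts in the $q^2\equiv 1\pmod p$ case and, if needed, add this extra conjugation-by-$g$ step.
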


\begin{proof}
Suppose that
$q^2\not\equiv 1\pmod p$.
Then the orientation-preserving isometry group
$\Isom^+(S^3-K(r))$ is isomorphic to
$\ZZ/2\ZZ\oplus\ZZ/2\ZZ$ (see \cite{Bonahon-Siebenmann, Sakuma2}).
By Tollefson's theorem~\cite{Tollefson} or
the orbifold theorem~\cite{Boileau-Porti, CHK},
this implies that $h$ is strongly equivalent to
one of the two strong inversions
$h_1$ and $h_2$ introduced in the introduction.
Hence, we obtain the conclusion by Proposition~\ref{prop:strong-involution_1}.

Suppose that
$q^2\equiv 1\pmod p$.
Then $\Isom^+(S^3-K(r))$ is isomorphic to
the dihedral group $D_8$ of order $8$ (see \cite{Bonahon-Siebenmann, Sakuma2}).
To be precise,
$\Isom^+(S^3-K(r))\cong \langle g, h_1 \svert g^4, h_1^2, (gh_1)^2\rangle$,
where $g$ and $h_1$ are as illustrated in Figure~\ref{fig.additional-inversion}.
By using this fact and Tollefson's theorem \cite{Tollefson} or
the orbifold theorem~\cite{Boileau-Porti, CHK},
we see that any strong inversion $h$ of $K(r)$ is strongly equivalent to $g^ih_1$ with $0\le i\le 3$.
Since $g^0h_1=h_1$, $gh_1=h_3$, $g^2h_1=gh_1g^{-1}(=h_2)$ and $g^3h_1=gh_3g^{-1}$,
we obtain the conclusion by Proposition~\ref{prop:strong-involution_1}.
\end{proof}

\section{Wirtinger generators and long upper/lower meridian pairs}
\label{sec:Wirtinger-generator}

Throughout this paper, we use the following notation:
For an element $x$ in a group, we denote $x^{-1}$ by $\bar x$.
For two elements $x$ and $y$ of a group and for a positive integer $k$,
we define $\langle xy\rangle^k$ to be the alternative product of $x$ and $y$ of length $k$.
Namely:
\[
\langle xy\rangle^k
=
\begin{cases}
(xy)^{\frac{k}{2}}
& \text{if $k$ is even};\\
(xy)^{\frac{k-1}{2}}x
& \text{if $k$ is odd}.
\end{cases}
\]
We also define $\langle xy\rangle^{-k}$ to be
$(\langle xy\rangle^k)^{-1}$.

Consider the genus-one $2$-bridge knot $K(r)$ with $r=[2m, \pm 2n]$,
where $m$ and $n$ are positive integers.
Let $c_i \ (-m \le i \le m+1)$ and $d_j \ (0 \le j \le 2n)$
be the Wirtinger generators of the knot group $G(K(r))$
as illustrated in Figure~\ref{fig.knot_diagram}(a) and (b)
according to whether $r=[2m,2n]$ and $[2m,-2n]$, respectively.
Here,
we follow the convention of \cite{Crowell-Fox}.
Namely, we assume that the base point of $E(K(r))$ lies far above the
projection plane, and
the symbol, say $c_i$, denotes the element of the
knot group represented by an oriented short arc passing under
the arc of the knot diagram with label $c_i$
in a {\it left-right} direction,
together with a pair of straight almost vertical arcs
joining the endpoints of the short arc with the base point of $E(K(r))$.

Set
\begin{align}
\label{generators-ab}
a:= c_{1} \quad \textrm{and} \quad b:=c_{0}^{-1}
\end{align}
Then $\{a,b\}$ is a generating pair of $G(K(r))$
which is identical with that
of $G(K(r))$ in {\rm \cite[Section~3]{lee_sakuma}}.
To see this, let $B_1$ be a small regular neighborhood of
the upper tunnel,
and $B_2$ be the closure of the complement of $B_1$ in $S^3$.
Then $(B_1,B_1\cap K(r))$ and $(B_2, B_2\cap K(r))$
are identified with the rational tangles of slopes $\infty$ and $r$, respectively.
Thus, from the description of the generator system in \cite[Section~3]{lee_sakuma},
we see that $\{a,b\}$ is the generating pair given in it.
Hence
\[
G(K(r))\cong \langle a,b \svert u_r \rangle,
\]
where $u_r$ is the cyclically reduced word in $a$ and $b$ in {\rm \cite[Lemma~3.1]{lee_sakuma}}.
Namely,
\begin{align}
\label{group-presentation}
u_{r}=a\hat{u}_{r}b\hat{u}_{r}^{-1}
\quad
\text{with}
\quad
\hat{u}_{r} = b^{\varepsilon_1} a^{\varepsilon_2} \cdots b^{\varepsilon_{p-2}} a^{\varepsilon_{p-1}},
\end{align}
where $r=q/p$ with $(p,q)=(4mn\pm 1,2n)$ and $\varepsilon_i = (-1)^{\lfloor iq/p \rfloor}$.
Here $\lfloor t \rfloor$ is the greatest integer not exceeding $t$.

\begin{figure}[h]
\includegraphics{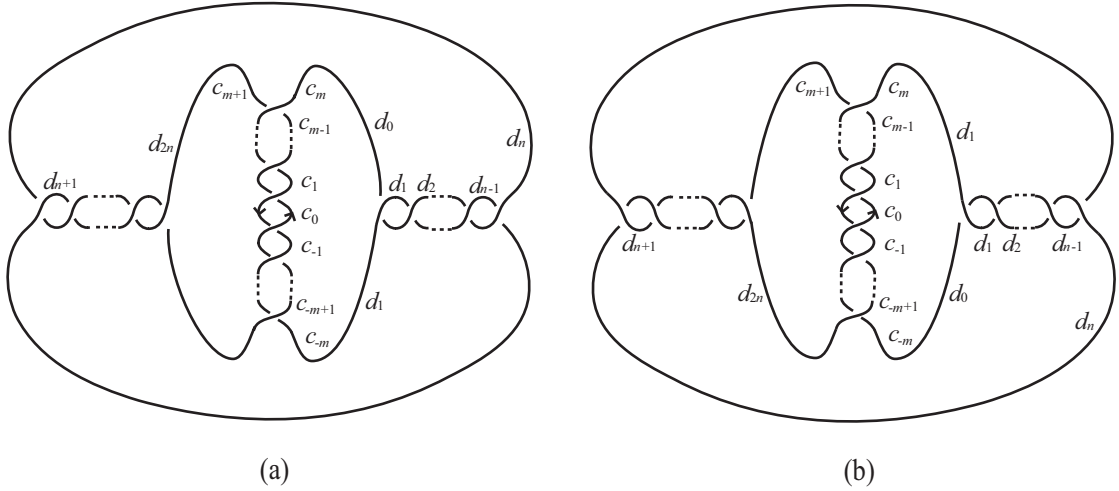}
\caption{
\label{fig.knot_diagram}
Wirtinger generators $c_i \ (-m \le i \le m+1)$ and $d_j \ (0 \le j \le 2n)$
of the knot group $G(K(r))$ with (a) $r=[2m,2n]$ and (b) $r=[2m,-2n]$.
The orientations of the overpasses are determined by those of the two
central overpasses. In particular, the orientations of the overpasses with label
$d_0$ and $d_n$ depend on the parities of $m$ and $n$.
}
\end{figure}

Let $f:=h_1h_2$ be the involution of $(S^3,K(r))$ induced by
the $\pi$-rotation about the axis which intersects the projection plane,
in Figure~\ref{fig.knot_diagram}, orthogonally at the central point.
We also use the same symbol $f$ to denote the automorphism of $G(K(r))$ induced by
the involution $f$.
Recall that $(\xr,\yr)$ denotes the long upper meridian pair of $G(K(r))$
as illustrated in Figure~\ref{fig.long_meridian_pair}.
Then we can easily observe the following lemma.

\begin{lemma}
\label{lem:group-symmetry}
We have $(f(a),f(b))=(b^{-1},a^{-1})$, $(f(\xr),f(\yr))=(\yr,\xr)$ and
\[
f(c_i)=c_{1-i} \quad (-m\le i \le m+1), \qquad
f(d_j)=d_{2n+1-j} \quad (1\le j\le 2n).
\]
\end{lemma}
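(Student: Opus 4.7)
The plan is to recognize the involution $f$ as an explicit rigid motion of $S^3$ respecting Figure~\ref{fig.knot_diagram}, and then to read off its action on the named generators of $G(K(r))$ directly from the diagram. First I would identify $f=h_1h_2$ as the $\pi$-rotation about the line perpendicular to the projection plane through the central point where the axes of $h_1$ and $h_2$ meet. Choosing the base point of $E(K(r))$ on this perpendicular axis makes it a fixed point of $f$, so $f$ induces a well-defined automorphism of $G(K(r))$, and its action on the projection plane is the point reflection in the central point.

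Next I would exploit the $(\ZZ/2\ZZ)^2$-symmetry of Figure~\ref{fig.knot_diagram} to see that this central reflection carries the overpass labeled $c_i$ to the one labeled $c_{1-i}$ and the overpass labeled $d_j$ to the one labeled $d_{2n+1-j}$. The one delicate point is the orientation bookkeeping: each $h_i$, being a strong inversion, reverses the orientation of $K(r)$, so the product $f$ preserves that orientation; combined with $f$ being orientation-preserving on $S^3$, this forces the ``left-right'' short arc under $c_i$ to be sent to the corresponding ``left-right'' short arc under $c_{1-i}$, and similarly for the $d_j$. Since the almost vertical tails joining these short arcs to the fixed base point are likewise carried to tails of the same shape, we obtain $f(c_i)=c_{1-i}$ and $f(d_j)=d_{2n+1-j}$ in $G(K(r))$.

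Finally, the remaining identities drop out. Using \eqref{generators-ab}, I would compute $f(a)=f(c_1)=c_0=b^{-1}$ and $f(b)=f(c_0^{-1})=c_1^{-1}=a^{-1}$, giving $(f(a),f(b))=(b^{-1},a^{-1})$. For the long upper meridian pair, the presentation of $\xr$ and $\yr$ in Figure~\ref{fig.long_meridian_pair} as the left and right lassos, each together with an almost vertical tail down to the base point, makes it immediate that the central symmetry $f$ interchanges the two lassos while respecting the vertical tails; thus $(f(\xr),f(\yr))=(\yr,\xr)$. The only real (and still modest) obstacle throughout the argument is the sign/orientation bookkeeping in the Wirtinger step described above; everything else is direct visual inspection of the symmetric diagrams.
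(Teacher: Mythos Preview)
Your proposal is correct and follows exactly the approach the paper intends: the paper does not give a proof at all beyond the sentence ``Then we can easily observe the following lemma,'' and your write-up is precisely the kind of direct diagrammatic inspection (identifying $f$ as the $\pi$-rotation about the perpendicular axis through the central point, reading off the permutation of overpasses from the $(\ZZ/2\ZZ)^2$-symmetry, and checking the orientation bookkeeping via the Wirtinger convention) that this observation unpacks to. There is nothing to add.
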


The following lemma can be easily verified
by a standard calculation on Wirtinger presentation,
where the second formula is obtained from the first formula
by using Lemma~\ref{lem:group-symmetry}.

\begin{lemma}
\label{lem:Wirtnger-generator-c}
For $1\le i\le m+1$, the following hold.
\[
c_i=
\begin{cases}
\langle \bar a \bar b\rangle^{i}
\langle a b\rangle^{i-1}
& \text{if $i$ is even};\\
\langle \bar a \bar b\rangle^{i-1}
\langle a b\rangle^{i}
& \text{if $i$ is odd}.
\end{cases}
\]
For $1\le i\le m$, the following hold.
\[
c_{-i}=
\begin{cases}
\langle b a\rangle^{i}
\langle \bar b \bar a\rangle^{i+1}
& \text{if $i$ is even};\\
\langle b a \rangle^{i+1}
\langle \bar b \bar a\rangle^{i}
& \text{if $i$ is odd}.
\end{cases}
\]
\end{lemma}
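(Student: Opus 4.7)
The plan is to prove the formula for $c_i$ with $1\le i\le m+1$ by induction on $i$, using the Wirtinger relations at the consecutive crossings in the upper twist region of Figure~\ref{fig.knot_diagram}, and then to deduce the formula for $c_{-i}$ by applying the involution $f$ of Lemma~\ref{lem:group-symmetry} to the formula for $c_{i+1}$. First, I would extract the Wirtinger recursion directly from the diagram: since the arcs $c_{i-1}$ and $c_{i+1}$ belong to the same strand of the upper rational tangle, separated by a single crossing whose over-arc is $c_i$, a standard inspection of the crossing signs gives
\[
c_{i+1}=c_i^{\varepsilon_i}\,c_{i-1}\,c_i^{-\varepsilon_i},\qquad \varepsilon_i=(-1)^i,
\]
valid for $1\le i\le m$, with the initial data $c_0=\bar b$ and $c_1=a$. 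The case $i=1$ of the asserted formula is immediate, and $c_2=c_1^{-1}c_0c_1=\bar a\bar b a$ supplies the case $i=2$.

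Second, rather than compare the closed-form expression to the conjugate $c_i^{\varepsilon_i}c_{i-1}c_i^{-\varepsilon_i}$ directly, I would reformulate the assertion as a one-step recursion. Writing $W_i$ for the right-hand side of the claimed formula, the definition of $\langle xy\rangle^k$ gives at once
\[
W_{i+1}=\bar a\bar b\cdot W_i \ \text{if $i$ is odd},\qquad W_{i+1}=W_i\cdot ba \ \text{if $i$ is even}.
\]
Granting the inductive hypothesis $c_j=W_j$ for $j\le i$, this recursion, applied one step back, yields the short identities $W_i^{-1}W_{i-1}=\bar a\bar b$ for $i$ odd and $W_{i-1}W_i^{-1}=ba$ for $i$ even. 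Substituting these into the Wirtinger relation gives $c_{i+1}=W_i^{-1}W_{i-1}W_i=\bar a\bar b\cdot W_i=W_{i+1}$ for $i$ odd, and similarly $c_{i+1}=W_iW_{i-1}W_i^{-1}=W_i\cdot ba=W_{i+1}$ for $i$ even, completing the induction.

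Finally, the formula for $c_{-i}$ follows by applying $f$ to the formula for $c_{i+1}$: using $f(c_{i+1})=c_{-i}$, $f(a)=\bar b$, and $f(b)=\bar a$ from Lemma~\ref{lem:group-symmetry}, the substitution $(a,b)\mapsto(\bar b,\bar a)$ converts $\langle\bar a\bar b\rangle^k$ into $\langle ba\rangle^k$ and $\langle ab\rangle^k$ into $\langle\bar b\bar a\rangle^k$, and the parities of $i+1$ and $i$ match up with the stated formula for $c_{-i}$. The main obstacle I expect is the cancellation bookkeeping in the Wirtinger induction: without the one-step recursion for $W_i$, one would have to simplify conjugate expressions whose combined length grows linearly in $i$, whereas isolating the short identities $W_i^{-1}W_{i-1}=\bar a\bar b$ and $W_{i-1}W_i^{-1}=ba$ keeps the verification at each step essentially trivial.
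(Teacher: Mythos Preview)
Your proposal is correct and follows the same approach as the paper: a direct Wirtinger induction for $c_i$, together with Lemma~\ref{lem:group-symmetry} to obtain $c_{-i}$ from $c_{i+1}$. The paper itself gives no details beyond ``a standard calculation on Wirtinger presentation, where the second formula is obtained from the first formula by using Lemma~\ref{lem:group-symmetry}''; your write-up is simply a clean, explicit version of that calculation, and your reformulation via the one-step recursion $W_{i+1}=\bar a\bar b\, W_i$ (for $i$ odd) and $W_{i+1}=W_i\, ba$ (for $i$ even) is an efficient way to avoid cancellation bookkeeping.
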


As shown in Figure~\ref{fig.knot_diagram}, we have
\begin{align}
\label{d0d1}
(d_0,d_1)
=
\begin{cases}
(c_m,c_{-m})
& \text{if $r=[2m,2n]$};\\
(c_{-m},c_m)
& \text{if $r=[2m,-2n]$}.
\end{cases}
\end{align}

\begin{lemma}
\label{lem:pre_longest-base}
The member $\yr$ of the long upper meridian pair $(\xr,\yr)$
is given by the following formula.
\[
\yr=
\begin{cases}
\langle d_1\bar d_0\rangle^{n} \bar b \langle d_1\bar d_0\rangle^{-n}
& \text{if $m$ is even},\\
\langle \bar d_1 d_0\rangle^{n} \bar b \langle \bar d_1 d_0\rangle^{-n}
& \text{if $m$ is odd}.
\end{cases}
\]
The other member $\xr$ is obtained from $\yr$ by replacing $(a,b)$
with $(b^{-1},a^{-1})$.
\end{lemma}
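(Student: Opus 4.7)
The plan is to compute $\yr$ directly by reading off the word represented by the right lasso in Figure~\ref{fig.long_meridian_pair}, using the Wirtinger calculus together with equation~\eqref{d0d1}. Concretely, pick an interior point $q$ on the long upper tunnel $\tau_1'$ and split $\tau_1'$ into subarcs $\gamma_1, \gamma_2$ meeting at $q$, with an almost vertical tail $\delta$ joining the basepoint of $E(K(r))$ to $q$. By the correspondence recalled in Section~\ref{sec:strong-inversion}, the loop $\yr = \delta\, \gamma_2\, C_2\, \gamma_2^{-1}\, \delta^{-1}$ (with $C_2$ a small meridional loop at the $\partial E(K(r))$-endpoint of $\gamma_2$) is precisely the right lasso plus tail depicted in Figure~\ref{fig.long_meridian_pair}. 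If $\gamma_2$ were contracted to a point at its $\partial E(K(r))$-endpoint, the remaining loop would be the meridian at the top of the overstrand $c_0$, namely $c_0 = \bar b$.

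Next I would determine the conjugation induced by the actual arc $\gamma_2$. As $\gamma_2$ sweeps from $q$ out around the right $2n$-twist region toward its endpoint, the loop representing $\yr$ passes under a sequence of knot strands from that twist region. A standard Wirtinger computation shows that each such under-crossing conjugates the base meridian by a signed power of the Wirtinger generator labelling the strand crossed; hence the loop represents $W\, \bar b\, W^{-1}$ for some word $W$ built from the labels of the strands traversed. Inspection of Figure~\ref{fig.long_meridian_pair} shows that $\gamma_2$ crosses exactly $n$ strands, alternating between the two ``sides'' of the twist region, and by \eqref{d0d1} these two sides are labelled by $d_0$ and $d_1$ (with their roles possibly interchanged depending on the sign of $\pm 2n$). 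Hence $W$ is an alternating word of length $n$ in $d_0^{\pm 1}$ and $d_1^{\pm 1}$, and a direct check of the orientation conventions of Figure~\ref{fig.knot_diagram} --- combined with Lemma~\ref{lem:Wirtnger-generator-c}, which records how the orientation of $c_{\pm m}$ depends on the parity of $m$ --- identifies this word as $\langle d_1 \bar d_0\rangle^n$ when $m$ is even and $\langle \bar d_1 d_0\rangle^n$ when $m$ is odd.

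The formula for $\xr$ is then immediate from Lemma~\ref{lem:group-symmetry}: the involution $f = h_1 h_2$ swaps $\xr$ and $\yr$ and sends $(a,b) \mapsto (\bar b, \bar a)$, so $\xr$ is obtained from $\yr$ by the formal substitution $(a, b) \mapsto (\bar b, \bar a)$. The main obstacle is the careful sign and orientation bookkeeping --- ensuring that each under-crossing contributes the correct signed letter, despite the orientations of the various $c_i$ and $d_j$ flipping with the parities of $m$ and $n$ and with the sign of $\pm 2n$. The cleanest way to handle this is to verify the base case $n=1$ (for each parity of $m$) by direct inspection of the picture, and then induct on $n$ by observing that enlarging the right twist region by one full twist prepends one letter of the appropriate form to the conjugating word $W$.
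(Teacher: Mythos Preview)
Your overall shape---writing $\yr$ as $W\,\bar b\,W^{-1}$ via the Wirtinger calculus and then invoking Lemma~\ref{lem:group-symmetry} for $\xr$---matches the paper. The gap is in your identification of the conjugating word $W$. The arc $\gamma_2$ does cross $n$ overstrands of the right twist region, but those overstrands are \emph{not} labelled $d_0$ and $d_1$: in Figure~\ref{fig.knot_diagram} the overstrands of the $2n$-twist box carry the labels $d_1,d_2,\dots,d_{2n}$, and $\gamma_2$ passes under $d_1,\dots,d_n$. Equation~\eqref{d0d1} only records that $d_0,d_1$ coincide with $c_{\pm m}$ at the \emph{entrance} of the twist box; it says nothing about the labels farther in. So the word $W$ you read off directly from the picture is a word in $d_1,\dots,d_n$, not in $d_0,d_1$.

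What the paper does---and what you are missing---is the simplification step. From Figure~\ref{fig.knot_diagram} one first reads (for $m$ even)
\[
W=
\begin{cases}
(\bar d_n d_{n-1})\cdots(\bar d_2 d_1) & \text{$n$ even},\\
(d_n \bar d_{n-1})\cdots(d_3\bar d_2)d_1 & \text{$n$ odd},
\end{cases}
\]
and then uses the Wirtinger relations \emph{at the crossings inside the twist region} to obtain the telescoping identity
\[
\bar d_{2j}\,d_{2j-1}=d_{2j-1}\,\bar d_{2j-2}=d_1\,\bar d_0,
\]
which collapses $W$ to $\langle d_1\bar d_0\rangle^{n}$. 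Your induction on $n$ could in principle replace this step, but as stated it does not: enlarging the twist region by one full twist adds a new overstrand with a \emph{new} label (not $d_0$ or $d_1$), so the inductive step still requires exactly the Wirtinger relation above to convert that new letter into the $d_0,d_1$ alphabet.
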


\begin{proof}
We prove the formula only in the case
where $r=[2m,\pm 2n]$ with $m$ even.
(The other case is proved similarly.)
Observe from Figures~\ref{fig.long_meridian_pair} and \ref{fig.knot_diagram}
that $\yr=w \bar b \bar w$, where
\[
w=
\begin{cases}
(\bar d_n d_{n-1}) \cdots (\bar d_2 d_1)
&\text{if $n \ge 2$ is even};\\
(d_n \bar d_{n-1}) \cdots (d_3 \bar d_2) d_1
& \text{if $n \ge 1$ is odd}.
\end{cases}
\]

On the other hand, we can observe from Figure~\ref{fig.knot_diagram}
that $\bar d_{2j} d_{2j-1}=d_{2j-1} \bar d_{2j-2}= d_1\bar d_0$.
Hence we obtain the formula.
The last assertion is a consequence of Lemma~\ref{lem:group-symmetry}.
\end{proof}

By using Lemmas~\ref{lem:Wirtnger-generator-c}, \ref{lem:pre_longest-base}
and the identities (\ref{generators-ab}), (\ref{d0d1}),
we can express $d_0$, $d_1$, $\xr$ and $\yr$ as words in $\{a,b\}$.

\begin{convention}
\label{con:words}
{\rm
In the remainder of this paper,
we use the symbols $d_0$, $d_1$, $\xr$ and $\yr$
to denote the {\it reduced} words in $\{a,b\}$
obtained as in the above.}
\end{convention}

For the definition of reduced words, see Section~\ref{sec:S-sequence} below.

\section{$S$-sequences of long upper/lower meridian pairs}
\label{sec:S-sequence}

We first recall basic terminology in combinatorial group theory and the definition
of $S$-sequences
and cyclic $S$-sequences introduced in \cite[Section~4]{lee_sakuma}.
Let $X$ be a set.
By a {\it word} in $X$, we mean a finite sequence
$x_1^{\varepsilon_1}x_2^{\varepsilon_2}\cdots x_t^{\varepsilon_t}$
where $x_i\in X$ and $\varepsilon_i=\pm 1$.
We call $x_1^{\varepsilon_1}$ and $x_t^{\varepsilon_t}$
the {\it initial letter} and {\it terminal letter} of the word, respectively.
For two words $u, v$ in $X$, by
$u \equiv v$ we denote the {\it visual equality} of $u$ and
$v$, meaning that if $u=x_1^{\varepsilon_1} \cdots x_t^{\varepsilon_t}$
and $v=y_1^{\delta_1} \cdots y_m^{\delta_m}$ ($x_i, y_j \in X$; $\varepsilon_i, \delta_j=\pm 1$),
then $t=m$ and $x_i=y_i$ and $\varepsilon_i=\delta_i$ for each $i=1, \dots, t$.
The length of a word $v$ is denoted by $|v|$.
A word $v$ in $X$ is said to be {\it reduced} if $v$ does not contain $xx^{-1}$ or $x^{-1}x$ for any $x \in X$.
A {\it cyclic word} is defined to be the set of all cyclic permutations of a
cyclically reduced word. By $(v)$ we denote the cyclic word associated with a
cyclically reduced word $v$.
Also by $(u) \equiv (v)$ we mean the {\it visual equality} of two cyclic words
$(u)$ and $(v)$. In fact, $(u) \equiv (v)$ if and only if $v$ is visually a cyclic shift
of $u$.

\begin{definition}
\label{def:alternating}
{\rm (1) Let $v$ be a reduced word in
$\{a,b\}$. Decompose $v$ into
\[
v \equiv v_1 v_2 \cdots v_t,
\]
where, for each $i=1, \dots, t-1$, all letters in $v_i$ have positive (resp., negative) exponents,
and all letters in $v_{i+1}$ have negative (resp., positive) exponents.
Then the sequence of positive integers
$S(v):=(|v_1|, |v_2|, \dots, |v_t|)$ is called the {\it $S$-sequence of $v$}.

(2) Let $(v)$ be a cyclic word in
$\{a, b\}$. Decompose $(v)$ into
\[
(v) \equiv (v_1 v_2 \cdots v_t),
\]
where all letters in $v_i$ have positive (resp., negative) exponents,
and all letters in $v_{i+1}$ have negative (resp., positive) exponents (taking
subindices modulo $t$). Then the {\it cyclic} sequence of positive integers
$CS(v):=\lp |v_1|, |v_2|, \dots, |v_t| \rp$ is called
the {\it cyclic $S$-sequence of $(v)$}.
Here the double parentheses denote that the sequence is considered modulo
cyclic permutations.

(3) A reduced word $v$ in $\{a,b\}$ is said to be {\it alternating}
if $a^{\pm 1}$ and $b^{\pm 1}$ appear in $v$ alternately,
i.e., neither $a^{\pm2}$ nor $b^{\pm2}$ appears in $v$.
A cyclic word $(v)$ is said to be {\it alternating}
if all cyclic permutations of $v$ are alternating.
In the latter case, we also say that $v$ is {\it cyclically alternating}.
}
\end{definition}

\begin{notation}
\label{notation:alternating}
\rm{
For $x,y \in \{a^{\pm 1}, b^{\pm 1}\}$
and for a word $w$ in $a,b$,
we write
$w\equiv x\cdots y$
if $w$ is an {\it alternating} word
which begins with $x$ and ends with $y$.
It should be noted that an alternating word
is determined by the initial letter and the $S$-sequence.
}
\end{notation}

In the remainder of this section,
we describe the $S$-sequences of
the words $d_0$, $d_1$, $\xr$ and $\yr$ in Convention~\ref{con:words}.

\begin{lemma}
\label{lem:longest-base}
Suppose $r=[2m,2n]$.
Then $d_0\equiv a$ or $\bar a\cdots a$
according to whether $m=1$ or $m\ge 2$,
and $d_1\equiv b \cdots\bar b$.
Moreover,
their $S$-sequences are given as follows.
\begin{enumerate}[\indent \rm (1)]
\item If $m$ is even, then
$S(d_0)=(m, m-1)$ and $S(d_1)=(m, m+1)$.
\item If $m$ is odd, then
$S(d_0)=(1)$ or $(m-1, m)$
according to whether $m=1$ or $m \ge 3$,
and $S(d_1)=(m+1,m)$.
\end{enumerate}
\end{lemma}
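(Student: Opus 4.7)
The plan is to derive Lemma~\ref{lem:longest-base} by direct substitution, using the identity $(d_0,d_1)=(c_m,c_{-m})$ from (\ref{d0d1}) for $r=[2m,2n]$ and then applying Lemma~\ref{lem:Wirtnger-generator-c} with $i=m$ to obtain explicit words in $\{a,b\}$. The entire content of the lemma amounts to unwinding the abbreviation $\langle xy\rangle^k$ in each parity case and checking that the two juxtaposed blocks produced by Lemma~\ref{lem:Wirtnger-generator-c} are already reduced, so that one can read off both the $S$-sequence and the initial/terminal letters.

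More concretely, I would split into four subcases according to the parity of $m$. For $m$ even, Lemma~\ref{lem:Wirtnger-generator-c} gives
\[
d_0 = c_m = \langle \bar a\bar b\rangle^{m}\langle ab\rangle^{m-1}
= (\bar a\bar b)^{m/2}(ab)^{(m-2)/2}a,
\qquad
d_1 = c_{-m} = (ba)^{m/2}(\bar b\bar a)^{m/2}\bar b.
\]
The first block in each expression ends in $\bar b$ or $a$, respectively, while the second block begins with $a$ or $\bar b$, so no cancellation occurs at the junction; the two blocks form maximal runs of opposite-sign letters, yielding $S(d_0)=(m,m-1)$ and $S(d_1)=(m,m+1)$, and the initial/terminal letters are as claimed. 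For $m$ odd and $m\ge 3$, the same substitution gives
\[
d_0 = (\bar a\bar b)^{(m-1)/2}(ab)^{(m-1)/2}a,
\qquad
d_1 = (ba)^{(m+1)/2}(\bar b\bar a)^{(m-1)/2}\bar b,
\]
again with no cancellation at the junction, so $S(d_0)=(m-1,m)$ and $S(d_1)=(m+1,m)$. The degenerate case $m=1$ is handled separately: Lemma~\ref{lem:Wirtnger-generator-c} gives $d_0=c_1=\langle ab\rangle^1=a$, hence $d_0\equiv a$ with $S(d_0)=(1)$, while $d_1=c_{-1}=\langle ba\rangle^{2}\langle\bar b\bar a\rangle^{1}=ba\bar b$ has $S(d_1)=(2,1)=(m+1,m)$, consistent with (2).

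There is no genuine obstacle: the only thing one needs to be careful about is the bookkeeping at the seam between the two $\langle\cdot\rangle$-blocks of $c_{\pm m}$, but Lemma~\ref{lem:Wirtnger-generator-c} is arranged so that the terminal letter of the first block and the initial letter of the second block always have opposite signs (one negative, one positive) and distinct base letters, so the juxtaposition is automatically reduced and cyclically non-cancelling. Thus the entire verification reduces to writing out the four cases above and comparing with Definition~\ref{def:alternating}; the statement about the initial and terminal letters follows at the same time since each block is itself alternating.
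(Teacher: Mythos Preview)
Your proposal is correct and follows essentially the same route as the paper: use the identity $(d_0,d_1)=(c_m,c_{-m})$ from (\ref{d0d1}) and substitute the formulas from Lemma~\ref{lem:Wirtnger-generator-c}, then observe that the resulting words are reduced. The paper phrases the last step slightly more succinctly by remarking that each of the two $\langle\cdot\rangle$-blocks is alternating and hence their juxtaposition is reduced, rather than checking the seam letter-by-letter as you do, but this is only a cosmetic difference.
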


\begin{proof}
By the formula~(\ref{d0d1}) and Lemma~\ref{lem:Wirtnger-generator-c}, we have
the following identity in the free group $F(a,b)$ with free basis $\{a,b\}$:
\[
d_0=c_m=
\begin{cases}
\langle \bar a \bar b\rangle^{m}
\langle a b\rangle^{m-1}
& \text{if $m$ is even};\\
\langle \bar a \bar b\rangle^{m-1}
\langle a b\rangle^{m}
& \text{if $m$ is odd}.
\end{cases}
\]
Since the last two words on the right hand side in the above identity are alternating
and therefore reduced, we obtain the assertion for the reduced word $d_0$.
Similarly, by the formula~(\ref{d0d1}) and
Lemma~\ref{lem:Wirtnger-generator-c} again,
we have the following identity in $F(a,b)$:
\[
d_1=c_{-m}=
\begin{cases}
\langle b a\rangle^{m}
\langle \bar b \bar a\rangle^{m+1}
& \text{if $m$ is even};\\
\langle b a\rangle^{m+1}
\langle \bar b \bar a\rangle^{m}
& \text{if $m$ is odd}.
\end{cases}
\]
Hence
we obtain the assertion for $d_1$.
\end{proof}

\begin{lemma}
\label{lem:longest-base_PLUS}
Suppose $r=[2m,-2n]$.
Then $d_0\equiv b \cdots\bar b$ and
$d_1\equiv a$ or $\bar a\cdots a$ according to whether $m=1$ or $m \ge 2$.
Moreover, their $S$-sequences are given as follows.
\begin{enumerate}[\indent \rm (1)]
\item If $m$ is even, then
$S(d_0)=(m, m+1)$ and $S(d_1)=(m, m-1)$.
\item If $m$ is odd, then
$S(d_0)=(m+1,m)$
and $S(d_1)=(1)$ or $(m-1, m)$
according to whether $m=1$ or $m \ge 3$.
\end{enumerate}
\end{lemma}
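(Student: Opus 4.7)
The plan is to mirror the proof of Lemma~\ref{lem:longest-base} verbatim, with $d_0$ and $d_1$ interchanged. The point is that the only thing that changes between the cases $r=[2m,2n]$ and $r=[2m,-2n]$ is the identification in formula~(\ref{d0d1}): now $(d_0,d_1)=(c_{-m},c_m)$ rather than $(c_m,c_{-m})$. Consequently, the roles of the expressions for $d_0$ and $d_1$ from Lemma~\ref{lem:longest-base} should simply swap, and the indicated $S$-sequences in the two cases should come out as claimed.

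First, I would substitute $d_0=c_{-m}$ and $d_1=c_m$ into Lemma~\ref{lem:Wirtnger-generator-c} to obtain in the free group $F(a,b)$ the identities
\[
d_0=\begin{cases}\langle ba\rangle^{m}\langle\bar b\bar a\rangle^{m+1}&\text{if $m$ is even,}\\ \langle ba\rangle^{m+1}\langle\bar b\bar a\rangle^{m}&\text{if $m$ is odd,}\end{cases}\qquad
d_1=\begin{cases}\langle\bar a\bar b\rangle^{m}\langle ab\rangle^{m-1}&\text{if $m$ is even,}\\ \langle\bar a\bar b\rangle^{m-1}\langle ab\rangle^{m}&\text{if $m$ is odd.}\end{cases}
\]
The key observation is that each of these expressions is an alternating word: within each of the two blocks on the right-hand side the letters alternate by construction, and since the two blocks carry opposite exponent signs, the concatenation point does not introduce $xx^{-1}$ or $x^{-1}x$. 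Hence each right-hand side is already a reduced (in fact, alternating) word, so the displayed word is literally the reduced word denoted by $d_0$ or $d_1$ in Convention~\ref{con:words}.

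Given this, the assertions about initial and terminal letters, and the $S$-sequences, are read off directly by inspecting the parities: for $m$ even, $\langle ba\rangle^m$ ends in $a$ so $d_0\equiv b\cdots\bar b$ with $S(d_0)=(m,m+1)$, and $\langle\bar a\bar b\rangle^m$ ends in $\bar b$ so $d_1\equiv\bar a\cdots a$ with $S(d_1)=(m,m-1)$; for $m$ odd the analogous counts give $S(d_0)=(m+1,m)$ with $d_0\equiv b\cdots\bar b$, and $S(d_1)=(m-1,m)$ with $d_1\equiv\bar a\cdots a$. The only case requiring a separate remark is the degenerate $m=1$, where the block $\langle\bar a\bar b\rangle^{m-1}$ is empty and one simply has $d_1=a$, so $S(d_1)=(1)$, consistent with the statement. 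I do not expect any serious obstacle, as the argument is purely mechanical once formula~(\ref{d0d1}) and Lemma~\ref{lem:Wirtnger-generator-c} are in hand; the ``hardest'' step is only the bookkeeping of parities at the junction of the two alternating blocks, which is exactly what was done in the proof of Lemma~\ref{lem:longest-base}.
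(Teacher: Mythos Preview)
Your proposal is correct and follows exactly the approach indicated in the paper, which simply states that the lemma is proved by using formula~(\ref{d0d1}) and Lemma~\ref{lem:Wirtnger-generator-c} as in Lemma~\ref{lem:longest-base}. You have carried out this computation explicitly and accurately, including the degenerate case $m=1$.
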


\begin{proof}
This is proved by using
the formula~(\ref{d0d1}) and
Lemma~\ref{lem:Wirtnger-generator-c}
as in Lemma~\ref{lem:longest-base}.
\end{proof}

\begin{proposition}
\label{prop:longest-base}
Suppose $r=[2m,2n]$, and set $\varepsilon=(-1)^n$.
Then the following hold.

{\rm (1)}
$\xr\equiv w_xa \bar w_x$ and $\yr\equiv w_y\bar b \bar w_y$,
where $w_x$ and $w_y$ are the alternating words such that
$w_x\equiv \bar a \cdots \bar b^{\varepsilon}$, $w_y\equiv b \cdots a^{\varepsilon}$
and
\[
S(w_x)=S(w_y)=
\begin{cases}
(m, (n-1)\langle 2m\rangle, m) & \textrm{if $n \ge 2$};\\
(m, m) & \textrm{if $n=1$}.
\end{cases}
\]

{\rm (2)}
$\xr\equiv \bar{a} \cdots a$, $\yr\equiv b \cdots \bar{b}$, and
\[
S(\xr^{\varepsilon})=S(\yr^{\varepsilon})=
\begin{cases}
(m, (n-1)\langle 2m\rangle, m, m+1, (n-1)\langle 2m\rangle, m) & \textrm{if $n \ge 2$};\\
(m, m, m+1, m) & \textrm{if $n=1$}.
\end{cases}
\]

{\rm (3)}
For any non-zero integer $k$,
the words $w_x a^k \bar w_x$ and $w_y \bar b^k \bar w_y$
are reduced,
and we have the identities
$\xr^k=w_x a^k \bar w_x$ and $\yr^k=w_y \bar b^k \bar w_y$ in $F(a,b)$.
Moreover, each of $w_x a^k \bar w_x$ and $w_y \bar b^k \bar w_y$ is
alternating if and only if $|k|=1$.
\end{proposition}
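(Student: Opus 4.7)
The plan is to establish (1) and (2) for $\yr$ by reducing the formula in Lemma~\ref{lem:pre_longest-base}, deduce the $\xr$ statements from the symmetry of Lemma~\ref{lem:group-symmetry}, and verify (3) by a routine free-group computation.

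Let $W := \langle d_1\bar d_0\rangle^n$ if $m$ is even and $W := \langle \bar d_1 d_0\rangle^n$ if $m$ is odd, so that Lemma~\ref{lem:pre_longest-base} gives $\yr = W\bar b\bar W$ in $F(a,b)$. Using Lemma~\ref{lem:longest-base}, the building block $d_1\bar d_0$ (resp.\ $\bar d_1 d_0$) is alternating, starts with $b$, ends with $a$, and has $S$-sequence $(m,2m,m)$, because at its inner junction the negative $(m+1)$-block coming from $d_1$ (or $\bar d_1$) merges with the negative $(m-1)$-block coming from $\bar d_0$ (or $d_0$) into a single $2m$-block with no letter cancellation. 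A short induction on $n$ then yields that $W$ is alternating with
\[
S(W) =
\begin{cases}
(m,(n-1)\langle 2m\rangle,m) & \text{if $n$ is even},\\
(m,(n-1)\langle 2m\rangle,m+1) & \text{if $n$ is odd},
\end{cases}
\]
starting with $b$ and ending with $a$ (resp.\ $\bar b$) in the even (resp.\ odd) case; each additional copy of the building block merges its leading positive $m$-block with the previous trailing positive $m$-block, and for $n$ odd the extra factor $d_1$ (resp.\ $\bar d_1$) contributes in the same way.

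I next analyze $\yr = W\bar b\bar W$ by parity of $n$. For $n$ even, $W$ ends in $a$ and $\bar W$ begins with $\bar a$, so no letter cancellation occurs; setting $w_y := W$ gives $\yr \equiv w_y\bar b\bar w_y$, and the junction blocks $(+m)(-1)(-m)$ merge into $(+m)(-(m+1))$, yielding the $S$-sequence of $w_y$ claimed in (1) and the $S$-sequence of $\yr^{\varepsilon}$ claimed in (2) with $\varepsilon = 1$. For $n$ odd, $W$ ends in $\bar b$ while $\bar W$ begins with $b$, producing exactly one free cancellation $\bar b\cdot b$ across the central $\bar b$; writing $W = X\bar b$, reduction gives $\yr = X\bar b\bar X$, so $w_y := X$ is alternating, begins with $b$, ends with $\bar a$, and has $S$-sequence $(m,(n-1)\langle 2m\rangle,m)$. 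The corresponding junction then has $w_y$ ending in a $(-m)$-block merging with the central $(-1)$ into $(-(m+1))$, followed by the leading $(+m)$-block of $\bar w_y$, so the reversed word $\bar\yr = \yr^{\varepsilon}$ has the $S$-sequence prescribed by (2) with $\varepsilon = -1$. The case $n = 1$ is the degenerate specialization where $(n-1)\langle 2m\rangle$ is empty. This settles the $\yr$ halves of (1) and (2).

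For the $\xr$ halves, Lemma~\ref{lem:group-symmetry} gives $\xr = f(\yr)$, where the letter-wise substitution $f\colon a\mapsto \bar b,\ b\mapsto \bar a$ flips all signs and swaps the two generators. Hence $f$ preserves both block sizes and the alternating property of any word, so $w_x := f(w_y)$ realizes $\xr \equiv w_x\, a\, \bar w_x$ with the same $S$-sequence as $w_y$, turning ``starts with $b$, ends with $a^{\varepsilon}$'' into ``starts with $\bar a$, ends with $\bar b^{\varepsilon}$'' and ``$\yr \equiv b\cdots\bar b$'' into ``$\xr \equiv \bar a\cdots a$''. Finally, (3) is immediate: the identity $\xr^k = w_x\, a^k\, \bar w_x$ in $F(a,b)$ comes from $(w_x a \bar w_x)^k$ collapsing its internal $\bar w_x w_x$ factors, and the word $w_x a^k \bar w_x$ is reduced because the last letter $\bar b^{\varepsilon}$ of $w_x$ involves a different generator than $a^{\pm 1}$ (and symmetrically at the other junction), so alternation holds precisely when $a^k$ is a single letter, i.e., $|k| = 1$. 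The main technical nuisance is tracking how the parity of $n$ determines whether cancellation occurs in the middle of $W\bar b\bar W$ and how this redistributes a single letter between the merged blocks; once the $(m,2m,m)$ shape of the building block is identified, everything else reduces to careful bookkeeping.
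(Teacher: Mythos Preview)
Your proof is correct and follows essentially the same approach as the paper: identify the building block $d_1\bar d_0$ (resp.\ $\bar d_1 d_0$) as an alternating word with $S$-sequence $(m,2m,m)$, compute $S(W)$ by the parity of $n$, set $w_y$ equal to $W$ or to $W$ with its last letter $\bar b$ removed according to that parity, and then invoke the symmetry $f$ for $\xr$. The only differences are organizational---you treat both parities of $m$ at once via the definition of $W$, and you spell out~(3) a bit more explicitly than the paper, which simply declares it an immediate consequence of~(1) and~(2).
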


In the above proposition,
the symbol ``$k \langle 2m\rangle$'' represents $k$ successive $2m$'s.

\begin{proof}
Note that $\xr=f(\yr)$
is obtained from $\yr$ by replacing $a^{\pm1}$ with $b^{\mp 1}$
by Lemma~\ref{lem:group-symmetry}.
Moreover, the assertion (3) is an immediate consequence of
the assertions (1) and (2).
Thus we prove (1) and (2) for $\yr$.

Suppose first that $m$ is even.
Then $\yr=\langle d_1\bar d_0\rangle^{n} \bar b \langle d_1\bar d_0\rangle^{-n}$
in $F(a,b)$
by Lemma~\ref{lem:pre_longest-base}.
By Lemma~\ref{lem:longest-base}, we see
$d_1\bar d_0\equiv (b\cdots \bar{b})(\bar{a} \cdots a)\equiv b \cdots a$
and $S(d_1 \bar{d_0})=(m,2m,m)$.

If $n$ is even,
then these identities imply that
$\langle d_1\bar d_0\rangle^{n}\equiv b \cdots a$ and
$S(\langle d_1\bar d_0\rangle^{n})=(m, (n-1)\langle 2m\rangle, m)$.
(This follows from the following fact:
Since the initial letter and the terminal letter of $d_1\bar d_0$
are $b$ and $a$, respectively,
the terminal component, $m$, of the $S$-sequence of the $i$-th factor $d_1\bar d_0$
is amalgamated with the initial component, $m$,
of the $S$-sequence of the $i+1$-th factor $d_1\bar d_0$ to form a component $2m$
of $S(\langle d_1\bar d_0\rangle^{n})$ for each $i$ with $1\le i\le n/2$.)
Set $w_y$ to be the alternating word $\langle d_1\bar d_0\rangle^{n}$
in $\{a,b\}$.
Then $\yr=w_y\bar b \bar w_y$ in $F(a,b)$
and $w_y\bar b \bar w_y\equiv (b \cdots a)\bar b (\bar a \cdots \bar b)\equiv b \cdots \bar b$
is alternating.
Hence $\yr\equiv w_y\bar b \bar w_y$.
Moreover, we can observe that $S(w_y)$ and $S(\yr)$ are of the desired form.

If $n$ is odd and $\ge 3$,
then
$\langle d_1\bar d_0\rangle^{n}\equiv
\langle d_1\bar d_0\rangle^{n-1}d_1
\equiv (b \cdots a)(b \cdots \bar a\bar b)\equiv b \cdots \bar a\bar b$.
(Here, we extend Notation~\ref{notation:alternating} so that
this identity means that $\langle d_1\bar d_0\rangle^{n}$ is an alternating
word in $\{a,b\}$ which begins with $b$ and end with $\bar a\bar b$.)
Moreover,
$S(\langle d_1\bar d_0\rangle^{n})=(m, (n-1)\langle 2m\rangle, m+1)$.
Let $w_y$ be the alternating word obtained from
the alternating word $\langle d_1\bar d_0\rangle^{n}$ by deleting
the last letter $\bar b$.
Then $w_y\equiv b \cdots \bar a$,
$S(w_y)=(m, (n-1)\langle 2m\rangle, m)$
and
$\yr=\langle d_1\bar d_0\rangle^{n} \bar b \langle d_1\bar d_0\rangle^{-n}
=(b \cdots \bar b) \bar b (b \cdots \bar b)$
reduces to the alternating word
$w_y \bar b \bar w_y
\equiv (b \cdots \bar a) \bar b (a \cdots \bar b)\equiv b \cdots \bar b$.
Hence $\yr\equiv w_y\bar b \bar w_y$.
Moreover, we can observe that $S(w_y)$ and $S(\yr)$ are of the desired form.
These arguments also work even when $n=1$, if we discard
the entry $(n-1)\langle 2m\rangle=0\langle 2m\rangle$.

Suppose next that $m$ is odd.
Then $\yr= \langle \bar d_1 d_0\rangle^{n} \bar b \langle \bar d_1 d_0\rangle^{-n}$
in $F(a,b)$ by Lemma~\ref{lem:pre_longest-base}.
By Lemma~\ref{lem:longest-base},
we see
$\bar d_1 d_0\equiv (b\cdots \bar{b})(\bar{a} \cdots a)=b \cdots a$
and $S(\bar d_1 d_0)=(m,2m,m)$.
By similar arguments,
we obtain the desired result
by setting $w_y$ to be the alternating word $ \langle \bar d_1 d_0\rangle^{n}$ or
the alternating word obtained from $ \langle \bar d_1 d_0\rangle^{n}$ by removing
the last letter
according to whether $n$ is even or odd.
\end{proof}

\begin{proposition}
\label{prop:longest-base_PLUS}
Suppose $r=[2m,-2n]$, and set $\varepsilon=(-1)^n$.
Then the following hold.

{\rm (1)}
$\xr\equiv w_xa \bar w_x$ and $\yr\equiv w_y\bar b \bar w_y$,
where $w_x$ and $w_y$ are the alternating words such that
$w_x\equiv b \cdots b^{\varepsilon}$,
$w_y\equiv \bar a \cdots \bar a^{\varepsilon}$,
except when $(m,n)=(1,1)$,
and such that
\[
S(w_x)=S(w_y)=
\begin{cases}
(m, (n-1)\langle 2m\rangle, m-1) & \textrm{if $m\ge 2$ and $n\ge2$}; \\
(m, m-1) & \textrm{if $m\ge 2$ and $n=1$};\\
(1, (n-1)\langle 2\rangle) & \textrm{if $m=1$ and $n \ge 2$};\\
(1) & \textrm{if $m=1$ and $n =1$}.
\end{cases}
\]
In the exceptional case when $(m,n)=(1,1)$,
we have $w_x\equiv b$ and $w_y\equiv \bar a$.

{\rm (2)}
$\xr\equiv b \cdots \bar b$, $\yr\equiv \bar a \cdots a$, and
\begin{enumerate}[\indent \rm (i)]
\item
If $m \ge 2$, then
\[
S(\xr^{\varepsilon})=S(\yr^{\varepsilon})=
\begin{cases}
(m, (n-1)\langle 2m\rangle, m, m-1, (n-1)\langle 2m\rangle, m) &
\textrm{if $n \ge 2$};\\
(m, m, m-1, m) & \textrm{if $n=1$}.
\end{cases}
\]
\item
If $m=1$, then
\[
S(\xr^{\varepsilon})=S(\yr^{\varepsilon})=
\begin{cases}
(1, (n-1)\langle 2\rangle, 3, (n-2)\langle 2\rangle, 1) & \textrm{if $n \ge 3$};\\
(1, 2, 3, 1) & \textrm{if $n=2$};\\
(1,2) & \textrm{if $n=1$}.
\end{cases}
\]
\end{enumerate}

{\rm (3)}
For any non-zero integer $k$,
the words $w_x a^k \bar w_x$ and $w_y \bar b^k \bar w_y$
are reduced,
and we have the identities
$\xr^k=w_x a^k \bar w_x$ and $\yr^k=w_y \bar b^k \bar w_y$ in $F(a,b)$.
Moreover, each of $w_x a^k \bar w_x$ and $w_y \bar b^k \bar w_y$ is
alternating if and only if $|k|=1$.
\end{proposition}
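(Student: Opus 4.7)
The proof will closely follow the strategy of Proposition~\ref{prop:longest-base}. First, by Lemma~\ref{lem:group-symmetry}, $\xr = f(\yr)$ is obtained from $\yr$ by the substitution $a^{\pm 1} \mapsto b^{\mp 1}$, so all statements about $\xr$ follow mechanically from those about $\yr$. Moreover, the assertion (3) is a formal consequence of (1) and (2): once we know $\yr \equiv w_y \bar b \bar w_y$ with $w_y$ alternating ending in the appropriate letter, and that the terminal letter of $w_y$ is not $b^{\pm 1}$, the word $w_y \bar b^k \bar w_y$ is visibly reduced for every nonzero $k$, equals $\yr^k$ in $F(a,b)$, and fails to be alternating exactly when $|k| \ge 2$. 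Hence the bulk of the work reduces to establishing (1) and (2) for $\yr$.

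Next I invoke Lemma~\ref{lem:pre_longest-base}, which gives
\[
\yr = \langle d_1 \bar d_0\rangle^{n} \bar b \langle d_1 \bar d_0\rangle^{-n}
\quad\text{or}\quad
\yr = \langle \bar d_1 d_0\rangle^{n} \bar b \langle \bar d_1 d_0\rangle^{-n}
\]
in $F(a,b)$ according as $m$ is even or odd, and Lemma~\ref{lem:longest-base_PLUS}, which pins down the initial letter, terminal letter, and $S$-sequence of $d_0$ and $d_1$ in the present case $r=[2m,-2n]$. The plan is then to compute $S(d_1 \bar d_0)$ (resp.\ $S(\bar d_1 d_0)$) by concatenating the $S$-sequences of the two factors, noting whether the terminal letter of the first factor has the same or opposite sign as the initial letter of the second; this tells us whether the boundary entries are amalgamated into a single entry or kept separate. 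Iterating this bookkeeping yields $S(\langle d_1\bar d_0\rangle^n)$ (and its counterpart), and one more amalgamation step around the conjugating $\bar b$ gives $S(\yr)$. Finally one reads off $w_y$ as the appropriate truncation (removing the final letter when it would collide with $\bar b$) so that $\yr \equiv w_y \bar b \bar w_y$ is the reduced form.

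The main obstacle is keeping the case analysis honest. Unlike Proposition~\ref{prop:longest-base}, where $S(d_0)$ and $S(d_1)$ are essentially symmetric and always have entries of size $m$ and $m\pm 1$, here the sub-case $m = 1$ degenerates: by Lemma~\ref{lem:longest-base_PLUS} the sequence $S(d_1)$ collapses to $(1)$, so one of the ``$m-1$'' blocks is empty and several entries that would amalgamate generically fail to do so. This is precisely why statement~(2) splits into the separate case~(ii) with the anomalous sequences $(1,(n-1)\langle 2\rangle, 3,(n-2)\langle 2\rangle,1)$, $(1,2,3,1)$, $(1,2)$, and why in statement~(1) the degenerate $(m,n)=(1,1)$ case produces the tiny words $w_x \equiv b$, $w_y \equiv \bar a$. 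I would therefore organize the proof into four blocks, $m\ge 2$ with $m$ even, $m\ge 2$ with $m$ odd, $m=1$ with $n\ge 2$, and $(m,n)=(1,1)$, and within each block treat the parity of $n$ separately to see whether the last iteration of $d_1\bar d_0$ (or $\bar d_1 d_0$) contributes a full copy or a truncated one, exactly as was done for $r=[2m,2n]$.

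Once these cases are dispatched, the sign $\varepsilon = (-1)^n$ in statement~(1) is read off from the terminal letter of the final factor, and the symmetry via $f$ transfers everything to $\xr$, completing the proof.
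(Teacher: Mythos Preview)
Your proposal is correct and follows essentially the same approach as the paper: reduce to $\yr$ via the involution $f$, derive (3) from (1) and (2), and then compute the $S$-sequences by applying Lemma~\ref{lem:pre_longest-base} together with Lemma~\ref{lem:longest-base_PLUS}, tracking amalgamation of boundary entries. Your four-block organization (separating $m=1$ entirely) is a minor repackaging of the paper's scheme, which instead treats the $m=1$ degeneration as a sub-case within the ``$m$ odd'' branch; both lead to the same computations.
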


\begin{proof}
As in the proof of Proposition~\ref{prop:longest-base},
we have only to prove the assertions (1) and (2) for $\yr$.

Suppose first that $m$ is even.
Then $\yr=\langle d_1\bar d_0\rangle^{n} \bar b \langle d_1\bar d_0\rangle^{-n}$
in $F(a,b)$ by Lemma~\ref{lem:pre_longest-base}.
By Lemma~\ref{lem:longest-base}, we see
$d_1\bar d_0\equiv (\bar a\cdots a)(b \cdots \bar b)\equiv \bar a \cdots \bar b$
and $S(d_1 \bar{d_0})=(m,2m,m)$.

If $n$ is even,
then these identities imply that
$\langle d_1 \bar{d_0}\rangle^{n}\equiv \bar a \cdots \bar a\bar b$ and
$S(\langle d_1 \bar{d_0}\rangle^{n})=(m, (n-1)\langle 2m\rangle, m)$.
Set $w_y$ to be the alternating word in $\{a,b\}$
obtained from the alternating word $\langle d_1\bar d_0\rangle^{n}$
by deleting the last letter $\bar b$.
Then $w_y\equiv \bar a \cdots \bar a$,
$S(w_y)=(m, (n-1)\langle 2m\rangle, m-1)$
and
$\yr=\langle d_1 \bar{d_0}\rangle^{n} \bar b \langle d_1 \bar{d_0}\rangle^{-n}
=(a \cdots \bar b) \bar b (b \cdots \bar a)$
reduces to the alternating word
$w_y \bar b \bar w_y
\equiv (a \cdots \bar a) \bar b (a \cdots \bar a)\equiv a \cdots \bar a$.
Hence $\yr\equiv w_y \bar b \bar w_y$.
Moreover, we can observe that $S(w_y)$ and $S(\yr)$ are of the desired form.

If $n$ is odd and $\ge 3$, then we have
$\langle d_1 \bar{d_0}\rangle^{n}\equiv
\langle d_1 \bar{d_0}\rangle^{n-1} d_1
\equiv (\bar a \cdots \bar b)(\bar a \cdots a)\equiv \bar a \cdots a$ and
$S(\langle d_1 \bar{d_0}\rangle^{n})=(m, (n-1)\langle 2m\rangle, m-1)$.
Set $w_y$ to be the reduced alternating word $\langle d_1 \bar{d_0}\rangle^{n}$.
Then $\yr=w_y \bar b \bar w_y$ and
$w_y \bar b \bar w_y\equiv (\bar a \cdots a)\bar b (\bar a \cdots a)
\equiv \bar a \cdots a$ is an alternating word.
Hence $\yr\equiv w_y \bar b \bar w_y$.
Moreover, we can observe that $S(w_y)$ and $S(\yr)$ are of the desired form.
These arguments also work even when $n=1$, if we discard
the entry $(n-1)\langle 2m\rangle=0\langle 2m\rangle$.

Suppose next that $m$ is odd.
Then $\yr= \langle \bar d_1 d_0\rangle^{n} \bar b \langle \bar d_1 d_0\rangle^{-n}$
in $F(a,b)$ by Lemma~\ref{lem:pre_longest-base}.
Since $\bar d_1=\langle \bar a\bar b\rangle^m \langle ab\rangle^{m-1}$
and $d_0=\langle ba\rangle^{m+1}\langle \bar b\bar a\rangle^m$
by Lemma~\ref{lem:longest-base}, we see
\[
 \langle \bar d_1 d_0\rangle^{n}
 \equiv
 \begin{cases}
 \bar a \cdots \bar a\bar b
 &
 \text{if $m\ge 3$ and $n$ is even;}\\
 \bar a \cdots a
 &
 \text{if $m\ge 3$ and $n$ is odd;}\\
  \bar a \cdots a\bar b
 &
 \text{if $m=1$ and $n$ is even;}\\
  \bar a \cdots \bar a
 &
 \text{if $m=1$ and $n\ge 3$ is odd;}\\
\bar a
 &
 \text{if $m=1$ and $n=1$.}
\end{cases}
\]
Set $w_y$ to be the alternating word
$\langle \bar d_1 d_0\rangle^{n}$ or
the alternating word obtained from $\langle \bar d_1 d_0\rangle^{n}$
by deleting the last letter $\bar b$,
according to whether $\langle \bar d_1 d_0\rangle^{n}$ ends with
$a^{\pm 1}$ or $\bar b$.
Then we can see as in the previous cases that
the desired results hold.
\end{proof}

\section{Proof of Theorem~\ref{thm:main_theorem}}
\label{sec:main_result}

In this section, we prove Theorem~\ref{thm:main_theorem}
by using the small cancellation theory.
For standard terminologies in the small cancellation theory,
we refer the readers to \cite[Chapter~V]{lyndon_schupp} and
\cite[Sections~5 and 6]{lee_sakuma}.

Recall the presentation $G(K(r))\cong \langle a,b \svert u_r \rangle$,
where $u_r$ is the cyclically reduced word in $a$ and $b$
given by the formula (\ref{group-presentation}).
Then the symbol $S(r)$ (resp., $CS(r)$) denotes the
$S$-sequence $S(u_r)$ of $u_r$
(resp., cyclic $S$-sequence $CS(u_r)$ of $(u_r)$).
In \cite{lee_sakuma},
we have proved that the sequence $S(r)$ has a canonical decomposition
$(S_1, S_2, S_1, S_2)$
and established various properties of the decomposition.
We summarize the key facts which are used in the proof of Theorem~\ref{thm:main_theorem}.
In the remainder of this section,
by a {\it piece}, we mean
a piece relative to the symmetrized set of relators
generated by $u_r$ (see \cite[Definition~5]{lee_sakuma}).

\begin{proposition}
\label{prop:S-sequence}
The canonical decomposition $(S_1, S_2, S_1, S_2)$
of the sequence $S(r)$ satisfies the following conditions.

\begin{enumerate} [\indent \rm (1)]
\item Each $S_i$ is symmetric and occurs only twice in
the cyclic sequence $CS(r)$.

\item
If $v$ is a subword of the cyclic word $(u_r)$
which is a product of $3$ pieces
but is not a product of $t$ pieces with $t<3$,
then $v$ contains a subword, $v'$,
such that $S(v')=(S_1,S_2, \ell)$
or $S(v')=(\ell, S_2,S_1)$,
for some $\ell\in\ZZ_+$.

\item
Suppose $r=[2m, 2n]$.
Then $CS(r)=\lp S_1,S_2,S_1,S_2\rp$
with $S_1=(2m+1)$ and $S_2=((2n-1)\langle 2m\rangle)$.
Moreover, if $v$ is a subword of $(u_r^{\pm1})$
such that $S(v)=(1,\ell)$ or $(\ell, 1)$
for some $\ell$ with $1\le \ell \le m$,
then $v$ is a piece.

\item
Suppose $r=[2m, -2n]=[2m-1,1,2n-1]$.
Then $CS(r)=\lp S_1,S_2,S_1,S_2\rp$
with $S_1=((2n-1)\langle 2m\rangle)$ and $S_2=(2m-1)$.
Moreover, if $v$ is a subword of $(u_r^{\pm1})$
such that $S(v)$ is of one of the following form,
then $v$ is a piece:
$\ell$ with $1\le \ell \le 2m$,
$(1,\ell)$ with $1\le \ell \le m$,
$(\ell, 1)$ with $1\le \ell \le m$,
$(k \langle 2m \rangle,1)$ with $0\le k\le 2n-2$,
$(1, k \langle 2m \rangle,1)$ with $0\le k\le 2n-3$,
or $(1,k \langle 2m \rangle)$ with $0\le k\le 2n-2$
\end{enumerate}
\end{proposition}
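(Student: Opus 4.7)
The plan is to extract the proposition from the general theory of the canonical decomposition of $S(u_r)$ developed in \cite{lee_sakuma}, specialized to the two-term continued fraction expansions $r=[2m,\pm 2n]$.

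For parts (3) and (4), I would compute $CS(r)$ directly from the formula $\varepsilon_i=(-1)^{\lfloor iq/p\rfloor}$ with $(p,q)=(4mn\pm 1,2n)$. The sign of $\varepsilon_i$ flips precisely when $\lfloor iq/p\rfloor$ increments, and since $p/q=2m\pm 1/(2n)$, the gaps between successive increments are all $2m$ or $2m\pm 1$, arranged in the pattern dictated by the one-step continued fraction. Translating this sign-run data through the identity $u_r=a\hat u_r b\hat u_r^{-1}$, and carefully accounting for the effect of the initial $a$ and the central $b$, which may either split or merge neighboring runs at the junctions, yields precisely $CS(r)=\lp S_1,S_2,S_1,S_2\rp$ with $S_1$ and $S_2$ of the stated forms.

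Part (1) then follows by inspection: the one-term sequences $(2m+1)$ and $(2m-1)$ are trivially symmetric, and the constant sequence $((2n-1)\langle 2m\rangle)$ is symmetric. That each $S_i$ appears exactly twice in $CS(r)$ is visible from the four-block form; one must only rule out accidental extra occurrences spanning block boundaries, which is straightforward because the unique entries $2m\pm 1$ (in (3), resp.\ (4)) serve as markers separating the blocks. For the ``moreover'' parts of (3) and (4), it suffices to exhibit two distinct occurrences of each listed subword pattern inside $(u_r^{\pm 1})$: the two identical copies of $S_1$ and the two of $S_2$ provide two genuinely different positions where each short pattern fits, thereby exhibiting it as a common subword of two distinct cyclic shifts of $u_r^{\pm 1}$, hence as a piece.

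For part (2), my strategy would be to use (1): since each $S_i$ appears only twice in $CS(r)$, a single piece cannot be so long as to contain both a full $S_1$ and a full $S_2$ block, for this would force the two occurrences of $S_1S_2$ in $(u_r)$ to cohabit with two distinct cyclic positions, producing a contradiction unless the ``piece'' is essentially the whole relator. Hence if $v$ is a product of three pieces but not fewer, its three constituent pieces must be arranged so that together they span a full $S_1$, a full $S_2$, and a further partial block $\ell$, yielding the $S$-sequence $(S_1,S_2,\ell)$ or its mirror $(\ell,S_2,S_1)$. The main obstacle is this last step: carrying it out rigorously requires a delicate case analysis of how three pieces can tile a region of the cyclic word $(u_r)$ and exactly which subword boundaries are permissible, which is precisely the technical content of the canonical-decomposition framework of \cite[Sections~5--6]{lee_sakuma}.
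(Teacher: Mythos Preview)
Your approach differs substantially from the paper's, which is worth noting: the paper's proof of this proposition is entirely by citation.  Part~(1) is quoted from \cite[Proposition~4.5]{lee_sakuma}; part~(2) is extracted from the proof of \cite[Lemma~3.3]{lee_sakuma_7}; the formulas for $CS(r)$ in (3) and~(4) are \cite[Lemma~3.16(1),(3)]{lee_sakuma_2}; and the piece assertions are read off from the explicit characterization of pieces in \cite[Lemmas~5.2 and~5.3(2-c)]{lee_sakuma}.  No argument is reproduced.  Your proposal, by contrast, is to rederive the specialized statements directly.  That is a legitimate and in some ways more transparent route for (3), (4) and the specialized form of~(1), since everything reduces to the explicit cyclic sequence $\lp 2m\pm 1,(2n-1)\langle 2m\rangle,2m\pm 1,(2n-1)\langle 2m\rangle\rp$.

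There is one genuine gap in your sketch of the ``moreover'' clauses.  You argue that each listed $S$-sequence pattern occurs at two positions in $CS(r)$, and conclude the corresponding $v$ is a piece.  But having the \emph{$S$-sequence} occur twice is not the same as having the \emph{word} $v$ occur twice: an alternating word is determined by its $S$-sequence together with its initial letter, and the two occurrences of the pattern in $CS(r)$ may a priori carry different initial letters.  What actually makes this work is the specific structure $u_r\equiv a\hat u_r b\hat u_r^{-1}$: any subword of $\hat u_r$ reappears, with the same letters, as a subword of $(u_r^{-1})$ via the $\hat u_r^{-1}$ factor, and this (together with the $a\leftrightarrow b^{-1}$ symmetry of Lemma~\ref{lem:group-symmetry}) is what the piece characterization in \cite{lee_sakuma} encodes.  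Your argument can be repaired by invoking this structure explicitly, but as written the step ``two positions in $CS(r)$ $\Rightarrow$ piece'' is not justified.

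For part~(2) you correctly identify that the heuristic ``a piece cannot contain a full $S_1S_2$'' does not by itself force the conclusion, and you defer to \cite{lee_sakuma}; this is exactly what the paper does (citing \cite{lee_sakuma_7}), so there is no divergence there.
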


\begin{proof}
(1) This is a part of \cite[Proposition~4.5]{lee_sakuma}.

(2) This follows from the proof of
\cite[Lemma~3.3]{lee_sakuma_7}.
In the lemma, pieces of the symmetrized set of relators
generated by a power $u_r^k$ with $k\ge 2$ of the relator $u_r$
is treated.
However, the same argument also works when $k=1$.
(See also \cite[the proof of Corollary~3.25]{lee_sakuma_2}.)

(3), (4)
The first assertions are nothing other than \cite[Lemma~3.16(1),(3)]{lee_sakuma_2}.
The second assertions follow from the characterization of pieces
described in \cite[Lemma~5.3(2-c)]{lee_sakuma}
and \cite[Lemma~5.2]{lee_sakuma}.
\end{proof}

\begin{proof}[Proof of Theorem~\ref{thm:main_theorem}]
Suppose on the contrary that
the subgroup of $G(K(r))$ with $r=[2m,\pm 2n]$
generated by $\xr$ and $\yr$ is not a free group.
Then there is a nontrivial relation consisting of $\xr$ and $\yr$.
We may assume after conjugacy that
$\xr^{k_1}\yr^{l_1} \cdots \xr^{k_t}\yr^{l_t}=1$
in $G(K(r))$, where each $k_i$ and $l_i$ are non-zero integers.
By Propositions~\ref{prop:longest-base}(3) and~\ref{prop:longest-base_PLUS}(3),
the word $\xr^{k_1}\yr^{l_1} \cdots \xr^{k_t}\yr^{l_t}$
is represented by the following cyclically reduced word $w$ in $\{a,b\}$:
\begin{align}
\label{word-w}
w:\equiv w_x a^{k_1}\bar w_x w_y \bar b^{l_1} \bar w_y \cdots
w_x a^{k_t}\bar w_x w_y \bar b^{l_t} \bar w_y.
\end{align}
Since $w=1$ in $G(K(r))$, there is a reduced van Kampen diagram $(M,\phi)$
over $G(K(r))=\langle a, b | \, u_r \rangle$ such that
$(\phi(\partial M))\equiv (w)$.
Namely, $M$ is a map, i.e.,
a finite $2$-dimensional cell complex
embedded in $\RR^2$,
and $\phi$ is a function assigning to each oriented edge $e$ of $M$, as a {\it label},
a reduced word $\phi(e)$ in $\{a,b\}$ such that the following conditions are satisfied.
\begin{enumerate}[\indent \rm (i)]
\item If $e$ is an oriented edge of $M$ and $e^{-1}$ is the oppositely oriented edge,
then $\phi(e^{-1})\equiv \phi(e)^{-1}$.

\item For any boundary cycle $\delta$ of any face of $M$,
$\phi(\delta)$ is a cyclically reduced word
such that $(\phi(\delta))\equiv (u_r^{\pm 1})$.
(If $\alpha=e_1, \dots, e_n$ is a path in $M$, we define $\phi(\alpha) \equiv \phi(e_1) \cdots \phi(e_n)$.)
\end{enumerate}

By \cite[Corollary~6.2]{lee_sakuma}, $M$ is a $[4,4]$-map
(cf. \cite[Definition~7]{lee_sakuma}).
Then by the Curvature Formula of
Lyndon and Schupp (see \cite[Corollary~V.3.4]{lyndon_schupp}),
we have
\[
\sum_{v \in \, \partial M} (3-d_M(v)) \ge 4,
\]
where $d_M(v)$ is the degree of a vertex $v\in \partial M$ in $M$.
This inequality yields the following Claim~1
(cf. \cite[Claim in the proof of Theorem~6.3]{lee_sakuma}.

\medskip
\noindent{\bf Claim 1.}
{\it In $ \partial M$,
there exist at least four more vertices of degree $2$
than vertices of degree at least $4$.}

\medskip
\noindent{\bf Claim 2.}
{\it Any two of degree $2$ vertices cannot lie consecutively on $\partial M$.}

\begin{proof}[Proof of Claim~2]
Suppose the contrary.
Then $(\phi(\partial M))\equiv (w)$ contains a subword $z$ such that
$z$ is a subword of $(u_r^{\pm 1})$
which is a product of three pieces but is not a product of $t$ pieces with $t\le 2$
(see \cite[Convention~1]{lee_sakuma}).
Note that $z$ is a subword of the cyclic word $(u_r^{\pm 1})$
and hence it is alternating.
On the other hand, we see from
Propositions~\ref{prop:longest-base}(3) and \ref{prop:longest-base_PLUS}(3)
that the words $w_x a^{k}\bar w_x w_y b^{-l} \bar w_y$ or
$w_y b^{-l} \bar w_y w_x a^{k}\bar w_x$ with $k,l\ne 0$
are alternating if and only if $|k|=|l|=1$.
Hence $z$ is a subword of the cyclically alternating cyclic word
$(w'):\equiv (\xr^{\varepsilon_{1,x}}\yr^{\varepsilon_{1,y}} \cdots \xr^{\varepsilon_{t,x}}\yr^{\varepsilon_{t,y}})$,
where $\varepsilon_{i,x}=k_i/|k_i|$ and $\varepsilon_{i,y}=l_i/|l_i|$.
We will show that this cannot be possible in each case.

\medskip
\noindent {\bf Case 1:} $r=[2m,2n]$.
By Proposition~\ref{prop:longest-base}(2),
$\xr\equiv \bar{a}\cdots a$ and $\yr\equiv b\cdots \bar{b}$.
Thus, in the cyclic $S$-sequence of $w'$,
the last component, $m$, of $S(\xr^{\varepsilon_{i,x}})$
and the first component, $m$, of $S(\yr^{\varepsilon_{i+1,y}})$
are amalgamated into a component $2m$.
Similarly, the first component, $m$, of $S(\xr^{\varepsilon_{i,x}})$
and the last component, $m$, of $S(\yr^{\varepsilon_{i-1,y}})$
are amalgamated into a component $2m$.
Hence, we see by using Proposition~\ref{prop:longest-base}(2) that
\begin{equation*}
CS(w')=\lp (2n-1)\langle 2m\rangle, (m+1, m)^{\varepsilon_1},
\dots, (2n-1)\langle 2m\rangle, (m+1, m)^{\varepsilon_{2t}} \rp,
\end{equation*}
where each $\varepsilon_i$ is either $1$ or $-1$
and $(m+1, m)^{-1}$ denotes $(m, m+1)$.
Since $z$ is a product of three pieces
and is not a product of two pieces,
we see by Proposition~\ref{prop:S-sequence}(2),(3) that
$S(z)$ contains $(2m+1,(2n-1)\langle 2m\rangle, \ell)$
or $(\ell, (2n-1)\langle 2m\rangle, 2m+1)$ as a subsequence, for some $\ell \in \ZZ_+$.
This implies that $CS(w')$ contains a term bigger than or equal to $2m+1$,
since $z$ is a subword of $w'$.
But this is an obvious contradiction to the above formula for $CS(w')$.

\medskip
\noindent {\bf Case 2.a:}
$r=[2m,-2n]$, where $m \ge 2$.
We see by using Proposition~\ref{prop:longest-base_PLUS}(2)
as in the previous case that
\begin{equation*}
CS(w')=
\lp (2n-1)\langle 2m\rangle, (m, m-1)^{\varepsilon_1},
\dots, (2n-1)\langle 2m\rangle, (m, m-1)^{\varepsilon_{2t}} \rp,
\end{equation*}
where each $\varepsilon_i$ is either $1$ or $-1$.
Since $z$ is a product of three pieces and is not a product of two pieces,
we see by Proposition~\ref{prop:S-sequence}(2),(4) that
$S(z)$ contains $((2n-1)\langle 2m\rangle, 2m-1, \ell)$
or $(\ell, 2m-1,(2n-1)\langle 2m\rangle)$ as a subsequence, for some $\ell \in \ZZ_+$.
This implies that $CS(w')$ contains a term $2m-1$,
since $z$ is a subword of $w'$.
But this is an obvious contradiction to the above formula for $CS(w')$.

\medskip
\noindent {\bf Case 2.b:} $r=[2m,-2n]$, where $m=1$.
By Proposition~\ref{prop:longest-base_PLUS}(2),
$\xr\equiv b\cdots \bar{b}$ and $\yr\equiv \bar{a}\cdots a$.
Thus as in the previous case,
we see by using Proposition~\ref{prop:longest-base_PLUS}(2),
\begin{equation*}
CS(w')=
\begin{cases}
\lp (2n-2)\langle 2\rangle, 3^{\varepsilon_1},
\dots, (2n-2)\langle 2\rangle, 3^{\varepsilon_{2t}} \rp
& \textrm{if $n \ge 2$};\\
\textrm{every term is $2$, $3$, or $4$} & \textrm{if $n=1$}.
\end{cases}
\end{equation*}
Since $z$ is a product of three pieces and is not a product of two pieces,
we see by Proposition~\ref{prop:S-sequence}(2),(4) that
$S(z)$ contains $((2n-1)\langle 2\rangle, 1, \ell)$
or $(\ell, 1,(2n-1)\langle 2\rangle)$ as a subsequence, for some $\ell \in \ZZ_+$.
This implies that $CS(w')$ contains a term $1$,
since $z$ is a subword of $w'$.
But this is an obvious contradiction to the above formula for $CS(w')$.
\end{proof}

By Claims~1 and 2, there must be some pair of degree $2$ vertices on $\partial M$
having only degree $3$ vertices between them.
Decompose $\partial M$ into paths:
\begin{equation}
\label{equ:decomposition}
\partial M=p_1 q_1 \cdots p_{s} q_{s},
\end{equation}
where every vertex lying in the closure of each $q_i$ has degree $3$
and every vertex lying in the interior of each $p_i$ has degree $2$ or
degree at least $4$.
Here some $q_i$ may be degenerate to a vertex.

Note that $\phi(p_1 q_1 \cdots p_{s} q_{s})$ is not alternating
at $\phi(q_i)$ in the sense that
(i) the last letter of $\phi(p_i)$ and the first letter of $\phi(p_{i+1})$
are the same letter, $a^{\pm 1}$ or $b^{\pm 1}$,
and (ii) $\phi(q_i)$ is equal to $a^{\pm k}$ or $b^{\pm k}$ with $k\ge 0$
accordingly.
On the other hand, $w$ is not alternating precisely at the subwords
$a^{k_j}$ with $|k_j|\ge 2$ and $\bar b^{l_j}$ with $|l_j|\ge 2$
in the expression (\ref{word-w}).
Hence $\phi(q_j)$ corresponds to (possibly empty) subword of
$a^{k_j}$ with $|k_j|\ge 2$ or $\bar b^{l_j}$ with $|l_j|\ge 2$.

Recall that there is a pair of degree $2$ vertices on $\partial M$
having only degree $3$ vertices between them.
After a cyclic permutation of indices,
we may assume that this occurs at $p_1q_1p_2$,
namely the last (resp., first) occurring vertex in the interior of $p_1$ (resp., $p_2$)
has degree $2$.
Then a terminal (resp., initial) subword of $\phi(p_1)$ (resp., $\phi(p_2)$)
is a subword of the cyclic word $(u_r^{\pm1})$
which is a product of two pieces but is not a piece in itself
(see \cite[Convention~1]{lee_sakuma}).
Thus we may assume that the following holds.
(The other possibility that $\phi(p_1q_1p_2)$ contains the subword
$\bar w_x w_y \bar b^{l_j} \bar w_y w_x$ with $|l_j|\ge 2$
can be settled by using Lemma~\ref{lem:group-symmetry}.)
\begin{enumerate}[(i)]
\item
$\phi(p_1q_1p_2)$ contains the subword
$\bar w_y w_x a^{k_j} \bar w_x w_y$ of $w$,
such that $|k_j|\ge 2$.
\item
$\phi(p_1)$ ends with the alternating subword $\bar w_y w_x a^{\varepsilon_{j,x}}$.
\item
$\phi(q_1)\equiv a^{\varepsilon_{j,x}|k_j-2|}$.
\item
$\phi(p_2)$ begins with the alternating subword $a^{\varepsilon_{j,x}} \bar w_x w_y$.
\end{enumerate}
From this, we will derive a contradiction in each case.

\medskip
\noindent {\bf Case 1:} $r=[2m,2n]$.
By using Proposition~\ref{prop:longest-base}(1), we can see
\begin{align*}
S(\bar w_y w_x a^{\varepsilon_{j,x}})
=
\begin{cases}
(m, (2n-1)\langle 2m\rangle, m, 1)
&
\text{if $(-1)^n= \varepsilon_{j,x}$;}\\
(m, (2n-1)\langle 2m\rangle, m+1)
&
\text{if $(-1)^n\ne \varepsilon_{j,x}$.}
\end{cases}
\end{align*}
Hence $S(\phi(p_1))$ ends with
$(m+\ell, (2n-1)\langle 2m\rangle, m, 1)$ or
$(m+\ell, (2n-1)\langle 2m\rangle, m+1)$
for some $\ell\ge 0$
according to whether $(-1)^n= \varepsilon_{j,x}$ or not.
Similarly, by using Proposition~\ref{prop:longest-base}, we can see
\begin{align*}
S(a^{\varepsilon_{j,x}} \bar w_x w_y)
=
\begin{cases}
(m+1, (2n-1)\langle 2m\rangle, m)
&
\text{if $(-1)^n= \varepsilon_{j,x}$;}\\
(1, m,(2n-1)\langle 2m\rangle, m)
&
\text{if $(-1)^n\ne \varepsilon_{j,x}$.}
\end{cases}
\end{align*}
Hence $\phi(p_2)$ begins with
$(m+1, (2n-1)\langle 2m\rangle, m+\ell)$ or
$(1, m,(2n-1)\langle 2m\rangle, m+\ell)$ for some $\ell\ge 0$
according to whether $(-1)^n= \varepsilon_{j,x}$ or not.

Thus we have shown that the following hold.
\begin{enumerate}[(i)]
\item
If $(-1)^n= \varepsilon_{j,x}$, then
$S(\phi(p_1))$ ends with
$((2n-1)\langle 2m\rangle, m, 1)$.
\item
If $(-1)^n\ne \varepsilon_{j,x}$, then
$\phi(p_2)$ begins with
$(1, m,(2n-1)\langle 2m\rangle)$.
\end{enumerate}
This leads to a contradiction as follows.
Suppose that $(-1)^n= \varepsilon_{j,x}$ and so $S(\phi(p_1))$ ends with
$((2n-1)\langle 2m\rangle, m, 1)$.
Recall that $\phi(p_1)$ ends with a subword, say $v$,
of $(u_r^{\pm1})$ which is a product of two pieces
but is not a piece.
Since $CS(r)=(2m+1, (2n-1)\langle 2m\rangle, 2m+1, (2n-1)\langle 2m\rangle)$,
we see $S(v)=(k,1)$ for some $k$ with $1\le k\le m$.
But then $v$ is a piece
by Proposition~\ref{prop:S-sequence}(3), a contradiction.
We also have a similar contradiction
in the remaining case when
$(-1)^n\ne \varepsilon_{j,x}$ and so
$\phi(p_2)$ begins with
$(1, m,(2n-1)\langle 2m\rangle)$.

\medskip
\noindent {\bf Case 2.a:}
$r=[2m,-2n]$, where $m \ge 2$.
As in Case~1,
by using Proposition~\ref{prop:longest-base_PLUS}(1)
and the facts that
$\phi(p_1)$ ends with $\bar w_y w_x a^{\varepsilon_{j,x}}$
and
$\phi(p_2)$ begins with $a^{\varepsilon_{j,x}} \bar w_x w_y$,
we can see
that either
$S(\phi(p_1))$ ends with $((2n-1)\langle 2m\rangle, m-1, 1)$
or
$S(\phi(p_2))$ begins with $(1, m-1, (2n-1)\langle 2m\rangle)$.
Noting that $CS(r)=\lp (2n-1) \langle 2m \rangle, 2m-1, (2n-1) \langle 2m \rangle, 2m-1\rp$,
this implies that
there is a subword, $v$, of $(u_r^{\pm 1})$
which is a product of two pieces but is not a piece,
such that $S(v)=(k,1)$ or $(1,k)$ with $k \le m-1$.
But then $v$ is a piece by Proposition~\ref{prop:S-sequence}(4),
a contradiction.

\medskip
\noindent {\bf Case 2.b:} $r=[2m,-2n]$, where $m=1$ and $n\ge 2$.
As in the previous case, we can see that
either
$S(\phi(p_1))$ ends with $((2n-2)\langle 2\rangle, 1)$
or
$S(\phi(p_2))$ begins with $(1, (2n-2)\langle 2\rangle)$.
Noting that $CS(r)=\lp (2n-1) \langle 2 \rangle,1, (2n-1) \langle 2 \rangle, 1\rp$,
this implies that
there is a subword, $v$, of $(u_r^{\pm 1})$
which is a product of two pieces but is not a piece,
such that $S(v)=(k \langle 2 \rangle,1)$ with $0\le k\le 2n-2$,
$(1, k \langle 2 \rangle,1)$ with $0\le k\le 2n-3$,
or $(1,k \langle 2 \rangle)$ with $0\le k\le 2n-2$.
But, this implies that $v$ is a piece by Proposition~\ref{prop:S-sequence}(4),
a contradiction.

\medskip
\noindent {\bf Case 2.c:} $r=[2m,-2n]$, where $m=n=1$.
In this case, by Proposition~\ref{prop:longest-base_PLUS}(1),
$w_x \equiv b$ and $w_y \equiv \bar{a}$.
So $\phi(p_1)$ ends with $a b a^{\varepsilon_{j,x}}$ and
$\phi(p_2)$ begins with $a^{\varepsilon_{j,x}} \bar{b} \bar{a}$.
Suppose that $\varepsilon_{j,x}=1$. (The other case when $\varepsilon_{j,x}=-1$
can be treated similarly.)
Since $\phi(p_1)$ ends with
a subword, $v_1$, of $(u_r^{\pm 1})$ which is a product of two pieces
but is not a piece, and since $\phi(p_2)$ begins with
a subword, $v_2$, of $(u_r^{\pm 1})$ which is a product of two pieces
but is not a piece, we see by using Proposition~\ref{prop:S-sequence}(4) that
$v_1 \equiv ba$ and $v_2 \equiv a\bar{b}\bar{a}$.
Considering the equality $v_1 \equiv ba$ together with the facts that
$CS(r)=\lp 2,1,2,1\rp$ and
every vertex lying in the closure of $q_1$ has degree $3$,
we see that
three incoming edges of each vertex lying in the closure of $q_1$
must have label $a$, $\bar a$ and $b$, respectively.
But then $bv_2 \equiv ba\bar{b}\bar{a}$ is a subword of $(u_r^{\pm 1})$,
which is a contradiction to $CS(r)=\lp 2,1,2,1\rp$.

The proof of Theorem~\ref{thm:main_theorem} is now completed.
\end{proof}

\section{Proof of Theorem~\ref{thm:main_theorem_2}}
\label{sec:main_result_2}

In this section, we prove Theorem~\ref{thm:main_theorem_2}
by using the homology of the double branched covering, $M(K(r))$,
of $S^3$ branched over $K(r)$
and the $\pi$-orbifold group $\OO(K(r))$
introduced by Boileau and Zimmermann~\cite{Boileau-Zimmermann}.

As in \cite[Section~2]{lee_sakuma},
we regard $(S^3,K(r))$ as the union of two rational tangles
$\rtangle{\infty}$ and $\rtangle{r}$
of slopes $\infty$ and $r$.
Here the common boundary
$\partial \rtangle{\infty}=\partial \rtangle{r}$
is identified with the Conway sphere $\Conways:=(\RR^2,\ZZ^2)/H$,
where $H$ is the group of isometries
of the Euclidean plane $\RR^2$
generated by the $\pi$-rotations around
the points in the lattice $\ZZ^2$.
For each rational number $s\in\QQQ=\QQ\cup\{\infty\}$,
a line of slope $s$ in $\RR^2-\ZZ^2$
projects to an essential simple loop,
denoted by $\alpha_s$, in $\PConway:=\Conway-\PP$.
Similarly,
a line of slope $s$ in $\RR^2$ passing through a point $\ZZ^2$
determines an essential simple proper arc,
denoted by $\delta_s$, in $\PConway:=\Conway-\PP$.
The rational number $s$ is called the {\it slope} of $\alpha_s$ and $\delta_s$.
By the definition of the rational tangles,
the loops $\alpha_{\infty}$ and $\alpha_r$ bound disks
in $B^3-t(\infty)$ and $B^3-t(r)$, respectively.

The double branched covering $M(K(r))$ of $(S^3,K(r))$
is the union of the solid tori $V_{\infty}$ and $V_r$
which are obtained as the double branched coverings
of $\rtangle{\infty}$ and $\rtangle{r}$, respectively.
Let $\tilde\alpha_0$ and $\tilde\alpha_{\infty}$ be lifts in $\partial V_{\infty}$ of
the simple loops $\alpha_0$ and $\alpha_{\infty}$, respectively.
Then $\tilde\alpha_0$ and $\tilde\alpha_{\infty}$
form the meridian and the longitude of $V_{\infty}$.
Similarly a lift $\tilde\alpha_r$ of $\alpha_r$ in $\partial V_r$ is a meridian of $V_r$.
Thus
$[\tilde\alpha_{\infty}]$ and $[\tilde\alpha_{r}]$ are the zero elements of
 $H_1(V_{\infty})$ and $H_1(V_r)$, respectively.
Since $[\tilde\alpha_{r}]=p[\tilde\alpha_0]+q[\tilde\alpha_{\infty}]$ in
$H_1(\partial V_{\infty})$, where $r=q/p$, we have
\begin{align*}
H_1(M(K(r))
\cong \langle \tilde\alpha_{0} \ | \
p[\tilde\alpha_0]\rangle
\cong
\ZZ/p\ZZ
\end{align*}

Recall the $\pi$-orbifold group $\OO(K(r))$ of the knot $K(r)$,
which is defined as the quotient of the knot group $G(K(r))$
by the normal subgroup normally generated by the square of meridians
(see \cite{Boileau-Zimmermann}).
Then $\OO(K(r))$ is the semidirect product
\[
\pi_1(M(K(r))) \rtimes \ZZ/2\ZZ \cong
H_1(M(K(r))) \rtimes \ZZ/2\ZZ
\cong
\ZZ/p\ZZ \rtimes \ZZ/2\ZZ
\]
and so it is isomorphic to the dihedral group of order $2p$.

\begin{figure}[h]
\includegraphics{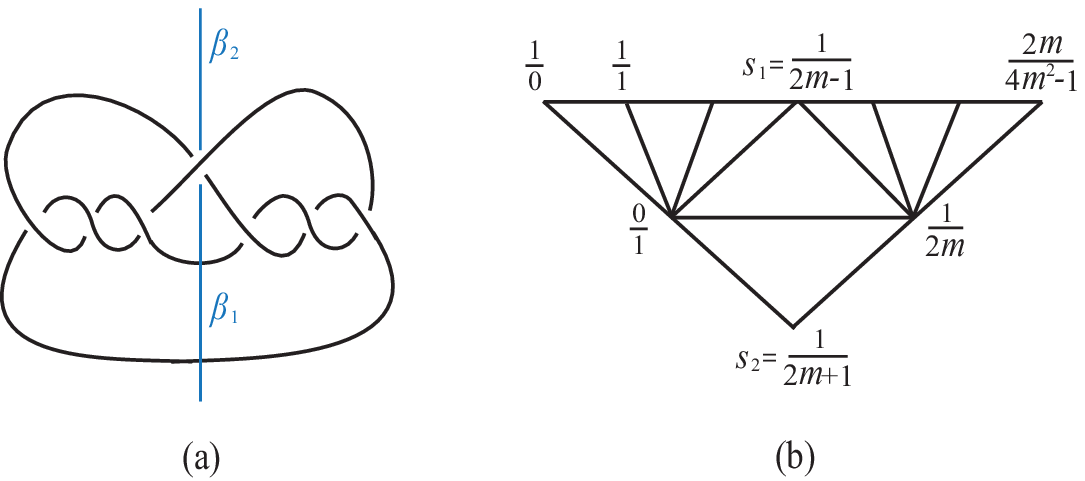}
\caption{\label{fig.additional-inversion2}}
\end{figure}

We prove Theorem~\ref{thm:main_theorem_2}
by showing that the images in $\OO(K(r))$
of the groups generated by the meridian pairs in the theorem
is a proper subgroup of $\OO(K(r))$.
Let $r$ be the rational number
\[
[2m,-2m]=[2m-1,1,2m-1]=\frac{2m}{4m^2-1}=\frac{2m}{(2m+1)(2m-1)}.
\]
Observe that the involution $h_3$ in Figure~\ref{fig.additional-inversion}
is equivalent to the involution in Figure~\ref{fig.additional-inversion2}(a).
Let $\beta_1$ and $\beta_2$ be the components
of $\beta-K(r)$ where $\beta=\Fix(h_3)$,
as illustrated in Figure~\ref{fig.additional-inversion2}(a).
We can observe that the arcs $\beta_1$ and $\beta_2$
are the proper essential arcs in the Conway sphere $\PConway$
of slopes $s_1:=1/(2m-1)$ and
$s_2:=1/(2m+1)$,
respectively.
In fact, the involution $h_3$ preserves $\PConway$ and
the involution of the Farey tessellation induced by the restriction of
$h_3|_{\PConway}$ is the reflection
in the geodesic joining $s_1$ and $s_2$
(see Figure~\ref{fig.additional-inversion2}(b)).

For $i=1,2$, let $\{x_i,y_i\}$ be the meridian pair represented by the proper arc $\beta_i$.
Then, by the above observation,
the subgroup $\langle x_i, y_i\rangle$ of $G(K(r))$
is equal to $\langle x_i, \alpha_{s_i}\rangle$,
where $\alpha_{s_i}$ is an element of $G(K(r))$
represented by the simple loop $\alpha_{s_i}$ in $\PConway$ of slope $s_i$.
Note that
\[
[\tilde\alpha_{s_1}]=(2m-1)[\tilde\alpha_0]+1[\tilde\alpha_{\infty}]
=(2m-1)[\tilde\alpha_0]
\in H_1(M(K(r))
\cong \ZZ/(4m^2-1)\ZZ.
\]
Thus the subgroup $\langle [\tilde\alpha_{s_1}]\rangle$
of $H_1(M(K(r)))$
has order $2m+1$ and so it is a proper subgroup of $H_1(M(K(r)))$.
Similarly,
the subgroup $\langle [\tilde\alpha_{s_2}]\rangle$
of $H_1(M(K(r)))$
has order $2m-1$ and so it is a proper subgroup of $H_1(M(K(r)))$.

On the other hand,
the image of $\langle x_i, y_i\rangle=\langle x_i, \alpha_{s_i}\rangle$
in $\OO(K(r))$ is the semidirect product
$\langle [\tilde\alpha_{s_i}]\rangle  \rtimes \ZZ/2\ZZ$.
Hence, it is a proper subgroup of $\OO(K(r))$,
and therefore
$\langle x_i, y_i\rangle$ is a proper subgroup of $G(K(r))$
for each $i=1,2$.
This completes the proof of Theorem~\ref{thm:main_theorem_2}.

\begin{remark}
{\rm
(1) We can show that $\langle x_2, y_2\rangle$ is a free group
by an argument parallel to the proof of Theorem~\ref{thm:main_theorem}.
However, our method does not work for the subgroup
$\langle x_1, y_1\rangle$.

(2)
Theorem~\ref{thm:main_theorem_2}
can be easily extended to
every $2$-bridge knot $K(r)$ with $r=q/p$ such that $q^2\equiv 1 \pmod{p}$.
In fact, we can see that, for the additional strong inversion $h_3$,
the components, $\beta_1$ and $\beta_2$, of $\Fix(h_3)\cap E(K(r))$ are proper essential arcs
of slopes $s_1=q_1/p_1$ and $s_2=q_2/p_2$,
where $p=p_1p_2$ and both $p_1$ and $p_2$ are greater than $1$.
Thus we can see that the subgroup of $\OO(K(r))$
generated by the meridian pair represented by $\beta_i$
is a proper subgroup for $i=1,2$.
}
\end{remark}

\section{Proof of Theorems~\ref{cor:Agol-special} and~\ref{thm:cor-epimorohism}}
\label{sec:main_result_3}

\begin{proof}[Proof of Theorem~\ref{cor:Agol-special}]
Immediate from
Theorems~\ref{thm:main_theorem} and~\ref{thm:main_theorem_2} and
Corollary~\ref{prop:strong-involution_2}.
\end{proof}

\begin{proof}[Proof of Theorem~\ref{thm:cor-epimorohism}]
The if part follows from the if part of \cite[Main Theorem~2.4]{lee_sakuma}
(which is essentially equivalent to \cite[Theorem~1.1]{Ohtsuki-Riley-Sakuma})
and the fact that $G(K(r))$ is isomorphic to $G(K(r'))$.
So we prove the only if part.
Let $\varphi:G(K(\tilde r))\to G(K(r))$ be an epimorphism
between $2$-bridge knot groups satisfying the assumption of the theorem.
By \cite[Corollary~1.3]{BBRW},
$\varphi$ maps
the upper meridian pair $\{\tilde a,\tilde b\}$ of $G(K(\tilde r))$
to peripheral elements of $G(K(r))$.
Thus $\{\varphi(\tilde a), \varphi(\tilde b)\}$ is a parabolic generating pair
and hence by Corollary~\ref{prop:strong-involution_2},
it is either (i) the upper or lower meridian pair,
(ii) the long upper or long lower meridian pair,
or (iii) isomorphic to the upper or lower exceptional pair.
However, Theorems~\ref{thm:main_theorem} and~\ref{thm:main_theorem_2} prohibit
the last two possibilities, and
hence $\varphi$ maps the upper meridian pair of $G(K(\tilde r))$
to the upper or lower meridian pair of $G(K(r))$.

Suppose first that $\varphi$ maps the upper meridian pair of $G(K(\tilde r))$
to the upper meridian pair of $G(K(r))$.
Then
$\tilde r$ or $\tilde r+1$ belongs to the $\hat\Gamma_r$-orbit of $r$ or $\infty$
 by \cite[Main Theorem~2.4]{lee_sakuma}.
Suppose next that $\varphi$ maps the upper meridian pair of $G(K(\tilde r))$
to the lower meridian pair of $G(K(r))$.
Note that
there is an isomorphism from $G(K(r))$ to $G(K(r'))$
which maps the lower meridian pair of $G(K(r))$ to
the upper meridian pair of $G(K(r'))$.
Thus the composition of $\varphi$ and this isomorphism
is an epimorphism from $G(K(\tilde r))$ to $G(K(r'))$
which maps the upper meridian pair of $G(K(\tilde r))$
to that of $G(K(r'))$.
Hence, by \cite[Main Theorem~2.4]{lee_sakuma},
$\tilde r$ or $\tilde r+1$ belongs to the $\hat\Gamma_{r'}$-orbit of $r'$ or $\infty$.
This completes the proof of Theorem~\ref{thm:cor-epimorohism}.
\end{proof}

\bibstyle{plain}

\bigskip

\end{document}